\documentclass[10pt,letter]{article}

\usepackage{amsmath}
\usepackage{amsfonts, amssymb, amsthm,times}
\usepackage{epsfig}
\usepackage{color}
\usepackage{multicol}

\newlength{\hchng}
\setlength{\hchng}{0.7in}
\newlength{\vchng}
\setlength{\vchng}{0.35in}
\addtolength{\oddsidemargin}{-\hchng}
\addtolength{\textwidth}{2\hchng}
\addtolength{\topmargin}{-\vchng}
\addtolength{\textheight}{2\vchng}


\newtheorem{thm}{Theorem}[section]
\newtheorem{prop}[thm]{Proposition}

\newtheorem{lemma}[thm]{Lemma}
\newtheorem{defn}[thm]{Definition}

\numberwithin{equation}{section}

\theoremstyle{definition}
\newtheorem{remark}[thm]{Remark}


\newcommand{\R}{\mathbb R}
\newcommand{\eps}{\varepsilon}

\newcommand{\grad} {\nabla}
\newcommand{\lap} {\triangle}

\newcommand{\dd} {\; \mathrm{d}}

\DeclareMathOperator{\dv}{div}

\newcommand{\MM}{\mathcal M}
\renewcommand{\AA}{\mathcal A}
\newcommand{\PP}{\mathcal P}

\newcommand{\DD}{\mathcal D}
\newcommand{\EE}{\mathcal E}
\newcommand{\II}{\mathcal I}
\newcommand{\RRd}{{\mathbb R}^n}

\title{H\"older continuity for a drift-diffusion equation with pressure}

\author{Luis Silvestre\footnote{Department of Mathematics, University of Chicago, 5734 University Avenue, Chicago, IL 60637. \texttt{luis@math.uchicago.edu}} \and Vlad Vicol\footnote{Department of Mathematics, University of Chicago, 5734 University Avenue, Chicago, IL 60637. \texttt{vicol@math.uchicago.edu}}}

\begin{document}
\maketitle

\begin{abstract}
We address the persistence of H\"older continuity for weak solutions of the linear drift-diffusion equation with nonlocal pressure
\begin{equation*}
u_t + b \cdot \grad u - \lap u = \grad p,\qquad \grad\cdot u =0
\end{equation*}on $[0,\infty) \times \R^{n}$, with $n \geq 2$. The drift velocity $b$ is assumed to be at the critical regularity level, with respect to the natural scaling of the equations. The proof draws on Campanato's characterization of H\"older spaces, and  uses a maximum-principle-type argument by which we control the growth in time of certain local averages of $u$. We provide an estimate that does not depend on any local smallness condition on the vector field $b$, but only on scale invariant quantities.
\end{abstract}

\section{Introduction}
\label{s:intro}

A classical problem in partial differential equations is to address the regularity of solutions to parabolic problems involving advection by a vector field $b$ and diffusion
\begin{equation} \label{e:no-pressure}
 u_t + b \cdot \grad u - \lap u = 0.
\end{equation}
If the vector field $b$ is sufficiently regular, the solution $u$ is expected to be regular as well. Naturally, this is expressed as a result of the type: if $b$ is bounded with respect to some norm, then $u$ is smooth in some sense. The appropriate norms for such statement are the ones that are either critical or subcritical with respect to the inherent scaling of the equation. More precisely, if $u$ is a solution of \eqref{e:no-pressure}, then for any $r>0$ the function $u_r(x,t) = r u(rx,r^{2}t)$ solves an equation of the same form but with drift velocity given by $b_r(x,t) = rb(rx,r^{2}t)$
. This change of variables acts as a \emph{zoom in} that focuses on the local behavior of the solution $u$. An assumption on $b$ is critical with respect to the scaling of the equation if the norm of $b_r$ coincides with the norm of $b$, for any $r>0$. The assumption would be subcritical if $b_r$ has smaller norm than $b$ for all small enough values of $r$, and supercritical otherwise. 

As a rule of thumb, with current methods it seems impossible to obtain a regularity result for \eqref{e:no-pressure} with a supercritical assumption on $b$, since the transport part of the equation would be stronger than diffusion in the small scales. With subcritical assumptions on $b$, it is generally possible to treat equation \eqref{e:no-pressure} as a perturbation of the heat equation, and strong regularity results in this direction are available. The Kato class condition for $b$ is probably the largest class that falls into this category. For results in the subcritical case see for example \cite{Aronson68,ChenZhao95,Zhang97}. 

Obtaining regularity estimates for \eqref{e:no-pressure} depending only on \emph{scale invariant} norms of $b$ requires the use of non-perturbative techniques, since the drift term does not become negligible at any scale. To the best of our knowledge, the only results available are variations of the De Giorgi-Nash-Moser Harnack inequality \cite{Moser61}, which states that weak solutions to \eqref{e:no-pressure} are $C^\alpha$ for positive time, for some small $\alpha>0$. Results in this direction include a variety of critical assumptions on the vector field $b$. For $b \in L^p_{t} L^q_{x}$ with $n/q+2/p=1$, we refer to \cite[Chapter 3]{LadyzhenskayaSollonikovUralceva67} or \cite{NazarovUraltseva10}. For divergence-free drift $b \in L^\infty_{t} BMO^{-1}_{x}$, the H\"older regularity of weak solutions was proved only recently in \cite{FriedlanderVicol10} and \cite{SereginSilvestreSverakZlatos10}. For $b$ in a space-time Morrey space, this result was obtained recently in \cite{NazarovUraltseva10}. See \cite{LiskevichZhang04,Semenov06,Zhang06} for other conditions on $b$ yielding H\"older regularity, such as the form boundedness condition.

The equation \eqref{e:no-pressure} is essentially a scalar equation, since even if $u$ is a vector field, each component would satisfy the same equation. In contrast, the De Giorgi-Nash-Moser theory is hard to apply to actual systems. In this article we consider a Stokes system with drift, i.e. we add a pressure term as is common for the equations of fluid dynamics, and we look for a solution $u$ which is divergence free. Given a divergence free vector field $b:[0,T]\times \R^n \to \R^n$, we consider the following evolution equation
\begin{equation} \label{e:linear-equation}
u_t + b \cdot \grad u - \lap u = \grad p
\end{equation}
for a solution $u : [0,T] \times \R^n \to \R^n$ which satisfies 
\begin{align}\label{e:divfree}
\nabla \cdot u = 0.
\end{align}
The term pressure gradient may be computed from \eqref{e:linear-equation}--\eqref{e:divfree} by the formula
\begin{align} \label{e:grad-p}
\grad p = \grad (-\lap)^{-1} \dv (b \cdot \grad u).
\end{align}

In this paper we prove that if a scale invariant norm of $b$ is bounded, then the $C^\alpha$ norm of $u$ at time $t$ is bounded in terms of its $C^\alpha$ norm at time zero. Our result is a propagation of regularity instead of a regularization result, in the sense that we require the initial data $u_0$ to be H\"older continuous, and $\alpha \in (0,1)$ is arbitrary.

The assumption on the divergence-free drift velocity $b$ is that it is an $L^{p}$ integrable function in time, with values in the $L^{1}$-based Morrey-Campanato space $M^{\beta}$, where $\beta \in [-1,1]$, and $p = 2/(1+\beta)$. We recall cf.~\cite[Definition 1.7.2]{Triebel92} the definition of the $L^{1}$-based Morrey-Campanato spaces $M^{s}$. For any $s \geq -n$, let $N = \max\{-1,[s]\}$. We say $f \in M^{s}$ if $f \in L^{1}_{loc}$ and 
\begin{align}
\sup_{x\in \R^{n}} \sup_{0<r<1} r^{-s} \inf_{P \in {\mathcal P}^{N}} \frac{1}{|B_{r}(x)|} \int_{B_{r}(x)} |f(z)-P(z)| \dd z < +\infty, \label{e:Morrey:def}
\end{align}
where ${\mathcal P}^{N}$ is the set of all polynomials of degree less than or equal to $N$, with the convention that ${\mathcal P}^{-1} = \{0\}$. In this article, we will only consider $\beta$ in the range $[-1,1]$.
The condition $b(\cdot,t) \in M^{\beta}$ has a different character depending of the value of the exponent $\beta$, cf.~\cite[Theorem 5.3.1]{Triebel92}: if $\beta=1$, $M^{\beta}$ coincides with the space of Lipschitz functions (here we subtract elements of ${\mathcal P}^{0}$ instead of ${\mathcal P}^{1}$, the later yielding the Zygmund class) ; if $\beta \in (0,1)$, it is exactly the H\"older class $C^\beta$; if $\beta=0$, it corresponds to the class of functions of bounded mean oscillation $BMO$; while if $\beta \in [-1,0)$ it is the usual Morrey-Campanato space. In all these cases, the estimate in our main theorem depends \emph{only} on the semi-norm $[b(\cdot,t)]_{M^{\beta}}$ associated to the space. In this paper we consider divergence-free drifts $b$ such that
\begin{align}
[b(\cdot,t)]_{M^{\beta}} =\sup_{x \in \R^{3}}\sup_{r>0} r^{-\beta}\MM(x,t,r) \leq g(t)\label{e:main}
\end{align}for some $g \in L^{p}([0,T])$,  where we define
\begin{align} 
\MM(x,t,r) =  \frac{1}{r^n} \int_{B_r(x)} |b(z,t)-\bar b(x,r,t)| \dd z = \int_{B_1(0)} |b(x+ry,t) - \bar b(x,r,t)| \dd y
\end{align}and $\bar b (x,r,t)$ is chosen to equal zero if $\beta \in [-1,0)$, the average of $b$ over $B_{r}(x)$ if $\beta =0$, respectively $b(x,t)$ if $\beta \in (0,1]$, which is equivalent to \eqref{e:Morrey:def} (except for $\beta=1$). We give further details on the precise assumptions on $b$ in Section~\ref{s:active-scalar} below. Our main theorem in the case $\beta \in (-1,1]$ is given in Theorem~\ref{t:main} below, while the endpoint case $\beta = -1$ is addressed in Theorem~\ref{t:endpoint} (see also Remark~\ref{r:critical}).

\begin{thm} \label{t:main}
Assume $b: [0,T] \times \R^n \to \R^n$ is a divergence-free vector field such that $b \in L^p([0,T];M^{\beta})$ with $\beta \in (-1,1]$ and $p = 2/(1+\beta)$. Assume also that $u_0 \in C^\alpha$ for some some $\alpha \in (0,1)$. 
Then there exists a weak solution $u: [0,T] \times \R^n \to \R^n$ of the system
\begin{align}
u_t + b \cdot \grad u - \lap u &= \grad p \label{eq:1}\\
\nabla \cdot u &= 0 \label{eq:2}\\
u(x,0) &= u_0(x) \label{eq:3}
\end{align}
such that $u(x,t)$ is $C^\alpha$ in $x$ for all positive time $t\in(0,T]$. Moreover, we have the estimate
\[ [u(\cdot,t)]_{C^\alpha} \leq C [u_0]_{C^\alpha}, \]
for some positive universal constant $C=C(T,\alpha,\beta, [b]_{L_{t}^pM_{x}^{\beta}})$.
\end{thm}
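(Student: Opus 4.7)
The plan is to recast H\"older continuity through Campanato's $L^1$ characterization and propagate the resulting seminorm by a maximum-principle argument. For each $t\in[0,T]$, $x_0\in\R^n$ and $r>0$, set
\begin{equation*}
\EE(x_0,r,t):=\inf_{c\in\R^n}\frac{1}{|B_r(x_0)|}\int_{B_r(x_0)}|u(x,t)-c|\,dx,\qquad \Phi(t):=\sup_{x_0,r}r^{-\alpha}\EE(x_0,r,t),
\end{equation*}
so that Campanato's theorem gives $\Phi(t)\sim[u(\cdot,t)]_{C^\alpha}$. Standard parabolic regularization (mollify $b$ and $u_0$, work with the resulting classical solution, restrict the supremum to a compact range of $r$, and eventually pass to the limit) reduces matters to a quantitative estimate $\Phi(t)\le C(T,\alpha,\beta,\|g\|_{L^p})\Phi(0)$ for smooth data.

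To derive this estimate I would track a (near-)maximizer $(x_0,r)$ of $r^{-\alpha}\EE(\cdot,\cdot,t)$ together with the optimal shift $c\in\R^n$ and compute $\frac{d}{dt}\EE(x_0,r,t)$ with $c$ held fixed. The equation for $v:=u-c$ is $v_t+b\cdot\grad v-\lap v=\grad p$, and integrating it against $\mathrm{sgn}(v)$ times a smooth cutoff of $B_r(x_0)$ produces three distinct contributions. The Laplacian, via a Kato-type inequality combined with Poincar\'e, supplies a dissipative gain at rate $r^{-2}$, which both damps oscillations at scale $r$ and provides a budget against which to trade singular contributions from the other terms. The transport term, which after using $\grad\cdot b=0$ becomes $\grad\cdot((b-\bar b)v)$, is controlled after an integration by parts by the Morrey-Campanato seminorm $g(t)r^\beta$ of $b$ multiplied against the local $C^\alpha$ oscillation of $u$.

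The pressure is the nonlocal piece and the main obstacle. The crucial identity, valid because $\grad\cdot b=\grad\cdot u=0$, is
\begin{equation*}
\grad p=\grad(-\lap)^{-1}\bdary_i\bdary_j\bigl((b_i-\bar b_i)(u_j-c_j)\bigr),
\end{equation*}
so $\grad p$ is a first-order Calder\'on--Zygmund operator applied to a tensor product whose two factors have controlled local oscillations. Decomposing the kernel integral into dyadic annuli around $x_0$, the near-field piece (radii $\lesssim r$) can be absorbed into the Laplacian's dissipation or balanced against the drift estimate; the far-field tails, where the kernel decays like $|y-x_0|^{-(n+1)}$, are summed as a geometric series whose convergence is guaranteed by $\alpha\in(0,1)$ and $\beta\in(-1,1]$. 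Arranging this decomposition so that no logarithmic loss appears at the critical scale $|y-x_0|\sim r$, and so that the near-field piece fits into the available dissipation, is the hardest step.

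Once the resulting pointwise inequality is taken to the supremum in $(x_0,r)$, a Young inequality at the critical exponent $p=2/(1+\beta)$ converts the singular $r$-factor produced by the drift/pressure estimates into a $g(t)^p$ factor (paid for by the dissipation from the Laplacian), yielding a differential inequality of the form $\Phi'(t)\le Cg(t)^p\Phi(t)$ in the viscosity-solution sense. Gr\"onwall's inequality then gives $\Phi(t)\le\exp\bigl(C\|g\|_{L^p([0,T])}^p\bigr)\Phi(0)$, which is the bound claimed in Theorem~\ref{t:main}.
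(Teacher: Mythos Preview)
Your overall strategy---recast H\"older regularity via Campanato and propagate the seminorm by a first-crossing argument---is exactly the paper's. But two of your three term-by-term claims fail as written, and both failures trace to the same choice: you work with the $L^1$ Campanato functional and test against $\mathrm{sgn}(v)$ on a ball, whereas the paper uses the $L^2$ functional $\II(x,t,r)=\int|u(x+ry)-\bar u|^2\varphi(y)\,dy$ with a \emph{smooth} radial weight $\varphi$. That difference is not cosmetic.

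\emph{Dissipation.} Kato's inequality gives only $\int_{B_r}\mathrm{sgn}(v)\cdot\lap v\,\chi\le\int|v|\lap\chi$, and the right side has no sign; it is an upper bound of order $r^{-2}\int_{\text{annulus}}|v|$, not a negative term. There is no Poincar\'e step that converts this into a strict loss $-cr^{-2}\EE$: if $u$ is affine on $B_r(x_0)$ the Laplacian contribution vanishes identically while $\EE\sim r$. The paper extracts genuine dissipation only by using the $L^2$ structure together with maximality in \emph{both} $x$ and $r$: from $\lap_x\II\le0$ one gets $\DD\le-\int|\grad u-\overline{\grad u}|^2\varphi$, and then a functional inequality (their Lemma~3.4) converts the $r$-criticality $r\partial_r\II=2\alpha\II$ into the quantitative bound $\DD\le -c(1-\alpha)^2 f^2 r^{2\alpha-2}$. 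None of these steps has an $L^1/\mathrm{sgn}$ analogue.

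\emph{Pressure tail.} Your far-field sum does \emph{not} converge for all admissible $(\alpha,\beta)$. Testing $\grad p=\grad R_iR_j\big((b-\bar b)\otimes(u-c)\big)$ against a bounded function gives a kernel decaying like $|y-x_0|^{-(n+1)}$, and the dyadic shell at scale $2^kr$ contributes $\sim g(t)\Phi(t)(2^kr)^{\alpha+\beta-1}$; this diverges whenever $\alpha+\beta\ge1$. The paper avoids this by first integrating the $\grad$ by parts onto $(u-\bar u)\varphi$, using $\dv u=0$: the resulting test function $\psi=(u-\bar u)\cdot\grad\varphi$ is smooth, compactly supported, has mean zero, and is itself a divergence, so $R_iR_j\psi$ decays like $|y|^{-(n+2)}$. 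That extra order makes the shell sum geometric with ratio $2^{\alpha+\beta-2}<1$ for all $\alpha<1$, $\beta\le1$. With $\mathrm{sgn}(v)$ in place of $(u-\bar u)\varphi$ you cannot integrate by parts (neither smooth nor divergence-free), so this gain is unavailable.

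In short, the $L^2$-with-smooth-weight setup is what makes both the quantitative dissipation and the convergent pressure tail work; switching to $L^1$ breaks each of them.
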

\begin{remark}\label{r:scalar}
To the best of our knowledge, the result of Theorem~\ref{t:main} is new even if the pressure term was removed, and the scalar equation \eqref{e:no-pressure}
was considered instead. Indeed, the De Giorgi-Nash-Moser iteration scheme provides a $C^\alpha$ estimate for the solution for some small $\alpha$ only, whereas our result provides a $C^\alpha$ estimate for any $\alpha \in (0,1)$.
\end{remark}

\begin{remark}\label{r:critical}
The assumption $b \in L_{t}^pM_{x}^{\beta}$ implies a local smallness condition in the sense that $||b||_{L^p([t-\tau,t],M^{\beta})}$ becomes arbitrarily small as $\tau \to 0$, due to uniform integrability, but without any rate. This is not true for the endpoint case $p=\infty$. However, this is not the reason why we require $p<\infty$ in Theorem~\ref{t:main}, and in fact this local smallness plays no role in our proof. Indeed, the constant $C$ in the estimate of Theorem~\ref{t:main} depends only on the scaling-invariant norm of $b$, and not on any other feature of the vector field $b$, such as the modulus of continuity of the map $Q \mapsto \Vert b \Vert_{L^{p}_{t}M_{x}^{\beta}(Q)}$. Any argument that relies on the local smallness of $b$ would make the constants in the estimates depend on the rate at which the local norm of $b$ decays, and would hence be implicitly a subcritical result.
\end{remark}


In the particular case when $b=u$, Theorem \ref{t:main} becomes a no-blowup condition for solutions to Navier-Stokes equation. It says that if the norm of $u$ remains bounded in $L^p_{t}M_{x}^{\beta}$, then $u$ does not blow up on $\R^{3} \times [0,T]$. This is a scale invariant condition that is slightly more general (but has the same scaling) than the classical Ladyzhenskaya-Foias-Prodi-Serrin condition $u \in L^p_{t}L^q_{x}$ with $2/p+n/q \leq 1$ (cf.~\cite{FoiasProdi67,Ladyzhenskaya67,Prodi59,Serrin62}). Note that the endpoint case $(q,p)=(3,\infty)$, when $n=3$, was only treated recently in \cite{IskauriazaSereginSverak03} (see also \cite{KenigKoch10} and Theorem~\ref{t:endpoint} below for a related statement). The result of Theorem~\ref{t:main} for the full range of $p \in [1,\infty)$ may be new for the Navier-Stokes equations as well, though it is comparable to other available regularity criteria in terms of scaling critical norms of $u$ (cf.~\cite{ChanVasseur07,CheskidovShvydkoy10b,CheskidovShvydkoy10a,Kukavica08} and references therein).

One important difficulty for proving Theorem \ref{t:main} is to deal with the non-local pressure term on the right hand side of \eqref{e:linear-equation}. There are very few results of this kind available for equations with pressure terms. In \cite{Zhang06} the same equation \eqref{e:linear-equation} is considered and a Lipschitz estimate is shown under a sub-critical assumption on $b$ (which includes $b \in L^p_{t} L^q_{x}$ with $2/p+n/q\leq 1-\epsilon$, for any $\epsilon >0$).

The idea of the proof is to write the H\"older regularity condition of $u(\cdot,t)$ in integral form using a classical theorem of Campanato \cite{Campanato63}. Then we claim that these local integral estimates have a certain growth in time (in terms of integral estimates on $b$). In order to prove that these estimates hold for all time we argue by contradiction and look for the first point in which they would be invalidated. At that time we apply the equation and obtain a contradiction in a way that resembles maximum-principle-type arguments (see also \cite{Kiselev10,KiselevNazarovVolberg07} for the SQG equations). The integral representation of the H\"older modulus of continuity allows us to take advantage of the divergence-free condition and the integral bound on $b$. The divergence free condition on $u$ is used in the estimate for the gradient of pressure term. The general method of the proof introduced here seems to be new, as it may be applied to systems with pressure gradients, and we believe it may be applicable to other evolution equations in the future.

In section \ref{s:endpoint} we analyze the endpoint case $\beta=-1$. The method of this article is applicable in this case, but we need to impose an extra smallness condition on the vector field $b$ (cf.~\eqref{eq:borderline:1}--\eqref{eq:borderline:2} below).

We believe that the most important contribution of this article is the introduction of a new method to prove H\"older estimates for evolution equations. Nevertheless, we show an example of how Theorem \ref{t:main} can help prove that a nonlinear equation is well posed. Let us consider the following \emph{modified} energy critical Navier-Stokes equation in 3D (see~\cite{ConstantinIyerWu08} for a similar modified critically dissipative SQG equation)
\begin{align}
&\partial_{t} u + \left( (-\Delta)^{-1/4} u \cdot \nabla \right) u - \Delta u  = \nabla p \label{eq:m:nse:1}\\
&\dv u = 0 \label{eq:m:nse:2}
\end{align}that is, $b = (-\Delta)^{-1/4} u$ in \eqref{eq:1}--\eqref{eq:2}. It follows directly from Theorem~\ref{t:main} that the system \eqref{eq:m:nse:1}--\eqref{eq:m:nse:2} is well-posed in the classical sense. Indeed, the global existence of weak solutions $u \in L^{\infty}_{t} L^{2}_{x} \cap L^{2}_{t}\dot{H}_{x}^{1}$ is straightforward, as is the local existence of strong solutions. In particular, for any $T>0$, we have that $(-\Delta)^{-1/4} u$ is  a priori bounded in $L^{2}(0,T;H^{3/2}(\R^3))$, and by the Calder\'on-Zygmund theorem we have
\begin{align}
\left\Vert (-\Delta)^{-1/4} u \right\Vert_{L^{2}(0,T;BMO)} < \infty
\end{align} for any $T>0$. Therefore, by applying Theorem~\ref{t:main} with $\beta = 0$, we obtain that $u(\cdot,t) \in C^{\alpha}$ on $(0,T]$, given that $u(\cdot,0)=u_{0} \in C^{\alpha}$, where $\alpha \in (0,1)$ is arbitrary. From this estimate, it is easy to obtain higher regularity of $u$ by a standard bootstrap argument. 

Note that for the above system one could also use classical energy estimates at the level of vorticity, combined with Sobolev interpolation, to obtain the global well-posedness of the problem. On the other hand our method allows some extra flexibility in the relationship between $b$ and $u$. As explained above, when $b=(-\Delta)^{-1/4} u$ the system is well posed. Following essentially the same idea we can obtain using Theorem \ref{t:main} that the system is well posed for any of the following choices
\begin{itemize}
\item $b = a(x) (-\lap)^{-1/4} u + \grad q$ for any bounded function $a$ in $\R^3$ and $\grad q$ is the gradient of a scalar function that makes $b$ divergence free. In this case we apply the a priori estimate $u \in L^4 L^3$ that is obtained by interpolation from the energy inequality, and gives $b \in L^4 L^6$.
\item $b = \int k(x,y) u(y) \dd y$ where $k(x,-) \in L^{2n/(n+2)}$ for any $x \in \R^n$ and $k(-,y)$ is divergence free for any $y \in \R^n$.
\item $\lap b = \dv(u \otimes u) + \grad q$.
\end{itemize}

We plan to explore other applications of this method in the future.


\section{Preliminaries}
\label{s:preliminaries}
In this section we state a few introductory remarks about the weak and classical solutions to \eqref{eq:1}--\eqref{eq:2}, and recall a classical characterization of H\"older spaces in terms of local averages. Throughout the rest of the paper we will write $L^p L^q$ to denote $L_t^p L_x^q = L^p(0,T;L^q)$, and similarly $L^p M^\beta$ will be used instead of $L_t^p M_x^\beta$.

We  first prove Theorem \ref{t:main} assuming that the solution is classical (i.e. $C^2$ in space and $C^1$ in time). The important feature is that the a priori estimate \eqref{e:main} depends only on the assumptions of Theorem \ref{t:main} and not on any further smoothness assumptions on $b$ or $u$. Then we approximate any weak solution with classical solutions by using a mollification of $b$, and pass to the limit to obtain the result of Theorem \ref{t:main} in full generality.

\begin{defn}[\bf Weak Solutions]
If $b \in L^1_{loc}([0,T]\times \R^{n})$ is divergence-free, a function $u \in L^\infty ([0,T]\times \R^{n})$ is a weak solution of \eqref{e:linear-equation}, if it is weakly divergence-free, and for all smooth, divergence-free, compactly supported test functions $\varphi$ we have:
\[ \int_{\R^n} \varphi(x,T) u(x,T) \dd x + \int_{[0,T]\times \R^n} u \ \left( -\varphi_t + b \grad \varphi - \lap \varphi \right) \dd x \dd t = \int_{\R^n} \varphi(x,0) u(x,0) \dd x. \]
\end{defn}

The following proposition is standard.

\begin{prop} \label{p:weaklimit}
Let $b^\eps$ and $u^\eps$ be a sequence of smooth divergence-free vector fields. Assume that $u^\eps$ is a weak solution of \eqref{e:linear-equation} with drift velocity $b^\eps$. Assume also that $b^\eps \to b$ in $L^1_{loc} L^1_{uloc}$. Then, up to a subsequence, $u^\eps$ converges weakly to a weak solution of \eqref{e:linear-equation}.
\end{prop}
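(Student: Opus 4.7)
The plan is to extract a weak-$*$ convergent subsequence of $u^\eps$ via a uniform $L^\infty$ bound, and then pass to the limit in each term of the weak formulation. The only nonlinear contribution is the drift term, which after using $\dv b^\eps = 0$ and integrating by parts takes the form $\int u^\eps b^\eps \cdot \grad \varphi$; this term is handled by combining the strong convergence of $b^\eps$ with the uniform $L^\infty$ control on $u^\eps$. The linear terms and the divergence-free constraint pass to the limit immediately, being linear in $u^\eps$.

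First I would invoke the uniform bound $\norm{u^\eps}_{L^\infty([0,T]\times\R^n)} \le \norm{u^\eps(\cdot,0)}_{L^\infty}$, which follows from the maximum principle for smooth solutions of \eqref{e:linear-equation} with smooth divergence-free drift (the uniform bound on the initial data being implicit in the context). By Banach-Alaoglu there is a subsequence and a limit $u \in L^\infty([0,T]\times\R^n)$ with $u^\eps \stackrel{*}{\rightharpoonup} u$, and the distributional condition $\dv u = 0$ is inherited from the $u^\eps$. Given any admissible test function $\varphi$, the terms $\int u^\eps \varphi_t\,\dd x\,\dd t$ and $\int u^\eps \lap \varphi\,\dd x\,\dd t$ converge to their analogues for $u$ by weak-$*$ convergence, since $\varphi_t$ and $\lap\varphi$ belong to $L^1([0,T]\times\R^n)$. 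For the drift term I would split
\[
\int u^\eps\, b^\eps \cdot \grad\varphi \,\dd x\,\dd t - \int u\, b \cdot \grad\varphi \,\dd x\,\dd t = \int u^\eps (b^\eps - b) \cdot \grad\varphi \,\dd x\,\dd t + \int (u^\eps - u)\, b \cdot \grad\varphi \,\dd x\,\dd t.
\]
The first integral vanishes because $u^\eps$ is bounded in $L^\infty$, $\grad\varphi$ is bounded with compact support, and $b^\eps \to b$ strongly in $L^1_{loc}L^1_{uloc}$; the second vanishes by the weak-$*$ convergence of $u^\eps$ tested against the $L^1$ function $b \cdot \grad\varphi$.

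The main technical subtlety I expect concerns the boundary terms $\int \varphi(x,0) u^\eps(x,0)\,\dd x$ and $\int \varphi(x,T) u^\eps(x,T)\,\dd x$, since weak-$*$ convergence in $L^\infty_{t,x}$ does not a priori provide convergence at fixed time slices. To address this, I would differentiate the weak formulation in $t$ to obtain, from the uniform $L^\infty$ bound on $u^\eps$ together with the $L^1_{loc}L^1_{uloc}$ bound on $b^\eps$, a uniform bound on $\partial_t u^\eps$ in a negative Sobolev space $H^{-k}_{loc}$. An Aubin-Lions-type compactness argument then upgrades the convergence of $u^\eps$ to strong convergence in $C([0,T]; H^{-k}_{loc})$ for $k$ sufficiently large, which suffices to pass to the limit in the trace terms at $t=0$ and $t=T$ and thereby verify that $u$ is a weak solution of \eqref{e:linear-equation} with drift $b$.
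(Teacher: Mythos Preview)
The paper does not actually prove this proposition; it simply declares it ``standard'' and moves on. Your compactness-plus-limit-passage outline is indeed the standard route, and the handling of the drift term and the time traces is fine. However, there is one genuine error at the very first step.

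You obtain the uniform bound $\norm{u^\eps}_{L^\infty} \le \norm{u^\eps(\cdot,0)}_{L^\infty}$ by invoking ``the maximum principle for smooth solutions of \eqref{e:linear-equation}.'' No such maximum principle is available for this system. Equation \eqref{e:linear-equation} carries a pressure gradient on the right-hand side: each component satisfies $\partial_t u^\eps_i + b^\eps \cdot \grad u^\eps_i - \lap u^\eps_i = \partial_i p^\eps$, and the nonlocal source $\partial_i p^\eps$, which couples all components of $u^\eps$ through \eqref{e:grad-p}, obstructs any comparison argument. The absence of a scalar maximum principle is precisely the obstacle the paper is written to overcome; if one existed, the entire Campanato machinery of Section~\ref{s:active-scalar} would be largely unnecessary (cf.\ Remark~\ref{r:scalar}).

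In the paper's only application of this proposition (the proof of Proposition~\ref{p:smoothing-theorem}), the uniform $L^\infty$ bound on $u^\eps$ is supplied \emph{a posteriori} by the uniform $C^\alpha$ estimate of Theorem~\ref{t:main} applied to the smooth approximations; the authors note explicitly ``in particular we also obtain $u^\eps \in L^\infty$.'' Thus the proposition should be read with a uniform $L^\infty$ bound on the sequence $u^\eps$ as an implicit hypothesis (the definition of weak solution already forces each $u^\eps \in L^\infty$, but uniformity in $\eps$ is not spelled out in the statement). Once you take that bound as given rather than derived, the remainder of your argument---Banach--Alaoglu extraction, the splitting of the drift term using strong convergence of $b^\eps$ against the uniform $L^\infty$ control on $u^\eps$, and the Aubin--Lions step for the traces at $t=0,T$---is correct and is exactly what ``standard'' means here.
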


Using proposition \ref{p:weaklimit}, we immediately observe the following.

\begin{prop} \label{p:smoothing-theorem}
It is enough to prove Theorem \ref{t:main} assuming that $b$ is smooth and $u$ is a classical solution. 
\end{prop}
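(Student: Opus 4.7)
The plan is to produce a smooth approximating sequence to which the classical-case version of Theorem~\ref{t:main} applies, and then to pass to the limit via Proposition~\ref{p:weaklimit}. First, fix a standard nonnegative radial mollifier $\rho \in C^{\infty}_{c}(\R^{n})$ with $\int \rho = 1$, let $\rho_{\eps}(x) = \eps^{-n} \rho(x/\eps)$, and set
\[
b^{\eps}(x,t) = (b(\cdot,t) * \rho_{\eps})(x), \qquad u_{0}^{\eps} = u_{0} * \rho_{\eps}.
\]
Since convolution commutes with the divergence, each $b^{\eps}$ is divergence-free and smooth in $x$; with a further time-mollification (or a standard parabolic regularization argument) we may assume $b^{\eps}$ is jointly smooth. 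Moreover $u_{0}^{\eps} \in C^{\infty} \cap C^{\alpha}$ with $[u_{0}^{\eps}]_{C^{\alpha}} \leq [u_{0}]_{C^{\alpha}}$.

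Next I would verify that the scale-invariant control \eqref{e:main} is preserved by mollification, i.e.\ $[b^{\eps}(\cdot,t)]_{M^{\beta}} \leq [b(\cdot,t)]_{M^{\beta}}$. This follows from Fubini and Jensen applied to the averaged-oscillation definition of $M^\beta$: subtracting from $b^{\eps}$ the corresponding mollification of the best approximating polynomial $P \in \mathcal{P}^{N}$ for $b$ on $B_{r}(x-y)$ and integrating $\rho_{\eps}(y)\, dy$, one obtains
\[
\frac{1}{|B_{r}(x)|}\int_{B_{r}(x)} |b^{\eps}(z,t) - (P * \rho_{\eps})(z)|\, dz
\leq \int \rho_{\eps}(y)\, \frac{1}{|B_{r}(x-y)|}\int_{B_{r}(x-y)} |b(z,t) - P(z)|\, dz\, dy.
\]
Taking the infimum in $P$ and multiplying by $r^{-\beta}$ gives the pointwise-in-$t$ bound, and then $\Vert b^{\eps}\Vert_{L^{p}M^{\beta}} \leq \Vert b\Vert_{L^{p}M^{\beta}}$. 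Standard mollifier theory also gives $b^{\eps} \to b$ in $L^{1}_{loc}L^{1}_{uloc}$ (since $L^{1}_{uloc}$-approximation of $L^1_{loc}$ functions by their mollifications is standard, and Morrey classes embed into $L^{1}_{uloc}$ in the relevant range).

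With $b^{\eps}$ smooth and $u_{0}^{\eps}$ smooth and bounded, the linear Stokes system \eqref{eq:1}--\eqref{eq:3} admits a unique classical solution $u^{\eps}$ by standard parabolic theory; the divergence-free and boundedness properties are preserved by the maximum principle for each component (after subtracting the pressure, which is given by \eqref{e:grad-p}). The classical version of Theorem~\ref{t:main}, which we are granting for the purpose of this reduction, therefore yields
\[
[u^{\eps}(\cdot,t)]_{C^{\alpha}} \leq C\bigl(T,\alpha,\beta,\Vert b^{\eps}\Vert_{L^{p}M^{\beta}}\bigr)\,[u_{0}^{\eps}]_{C^{\alpha}} \leq C\bigl(T,\alpha,\beta,\Vert b\Vert_{L^{p}M^{\beta}}\bigr)\,[u_{0}]_{C^{\alpha}},
\]
with a constant \emph{independent of} $\eps$ thanks to the monotonicity proved above.

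Finally, I would extract a subsequence $u^{\eps}$ converging weakly-$*$ in $L^{\infty}$ (and, e.g., locally uniformly by Arzel\`a--Ascoli using the uniform $C^{\alpha}$ control) to some $u$. By Proposition~\ref{p:weaklimit}, $u$ is a weak solution of \eqref{e:linear-equation}--\eqref{e:divfree} with drift $b$ and initial data $u_{0}$. The H\"older seminorm is lower semicontinuous under this convergence, so
\[
[u(\cdot,t)]_{C^{\alpha}} \leq \liminf_{\eps \to 0} [u^{\eps}(\cdot,t)]_{C^{\alpha}} \leq C\,[u_{0}]_{C^{\alpha}},
\]
which is exactly the conclusion of Theorem~\ref{t:main} for the original (non-smooth) $b$. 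The main technical point is the monotonicity $[b^{\eps}]_{M^{\beta}} \leq [b]_{M^{\beta}}$, since one must handle all four regimes of $\beta$ uniformly through the polynomial-subtracted definition \eqref{e:Morrey:def}; everything else is standard mollification and compactness.
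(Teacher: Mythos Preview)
Your approach is essentially identical to the paper's: mollify $b$ to obtain smooth divergence-free $b^{\eps}$ with uniformly bounded $L^{p}M^{\beta}$ norm and $b^{\eps}\to b$ in $L^{1}_{loc}L^{1}_{uloc}$, solve the approximate system classically, apply the classical-case estimate with $\eps$-independent constants, and invoke Proposition~\ref{p:weaklimit} to pass to the limit. You supply more detail than the paper on the monotonicity $[b^{\eps}]_{M^{\beta}}\le [b]_{M^{\beta}}$ and on lower semicontinuity of the H\"older seminorm; one small caveat is that your aside about a componentwise maximum principle is not quite right in the presence of the nonlocal pressure, but this plays no role since existence of smooth solutions for smooth $b$ follows (as the paper notes) from the mild formulation and Picard iteration.
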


\begin{proof}[Proof of Proposition~\ref {p:smoothing-theorem}]
The assumption $b \in L^p M^{\beta}$ implies in particular that $b \in L^1_{loc} L^1_{uloc}$. Using a mollification argument, we consider a sequence of smooth vector fields $b^\eps$ converging strongly to $b$ in $L^1_{loc} L^1_{uloc}$. Moreover, we choose $b^\eps$ such that $||b^\eps||_{L^pM^{\beta}}$ is bounded uniformly with respect to $\eps$ (for example mollifying with a smooth function with fixed $L^1$ norm). For each of these vector fields, we solve the equation \eqref{e:linear-equation}, for instance using the mild formulation and Picard iteration, to obtain a smooth solution $u^\eps$. If the result of Theorem~\ref{t:main} is known for classical solutions, we would have that $u^\eps$ satisfies the estimate \eqref{e:main} uniformly in $\eps$. Note that in particular we also obtain $u^{\eps} \in L^{\infty}$. By Proposition \ref{p:weaklimit}, up to a subsequence, $u^\eps$ converges weakly to a weak solution $u$ of \eqref{e:linear-equation}, and therefore this solution $u$ satisfies \eqref{e:main} as well.
\end{proof}

In order to prove the main theorem, we use a local integral characterization of H\"older spaces. For this purpose, let $\varphi$ be a nonnegative, radially symmetric, smooth function supported in $B_1(0)$. Unless otherwise specified, the center of the unit ball $B_{1}$  in $\R^{n}$ shall be $0$. Let us also assume that $\int \varphi(y) \dd y = 1$. The following theorem (or a small variation of it) is proved in \cite{Campanato63}.

\begin{thm}[\bf Campanato's characterization of H\"older spaces] \label{t:campanato}
Let $f:\R^n \to \R^m$ be an $L^2$ function such that for all $r>0$ and $x \in \R^n$, there exists a constant $\bar{f}$ such that
\begin{align} \label{e:campanato}
\int_{B_1} |f(x+ry)-\bar{f}|^2 \varphi(y) \dd y \leq A^2 r^{2\alpha}
\end{align} for some positive constant $A$, and $\alpha \in (0,1)$. Then the function $f$ has a H\"older continuous representative such that
\[ |f(x)-f(y)| \leq B A |x-y|^\alpha\]
where the constant $B$ depends on dimension and $\alpha$ only. 
\end{thm}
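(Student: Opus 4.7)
The plan is to follow the classical three-step route for Campanato's theorem: pick a canonical family of local averages, show they are Cauchy in the radius with the prescribed rate, and then compare them at two nearby points. First I would replace the unspecified constant $\bar f$ in the hypothesis by the $\varphi$-weighted mean
\[
\bar f_r(x) = \int_{B_1} f(x+ry)\,\varphi(y)\,dy.
\]
A Cauchy--Schwarz argument (using $\int \varphi = 1$) gives $|c - \bar f_r(x)|^2 \leq \int_{B_1} |f(x+ry) - c|^2 \varphi(y)\,dy$ for every constant $c$; applied with $c = \bar f$ this shows $|\bar f - \bar f_r(x)|^2 \leq A^2 r^{2\alpha}$ and hence $\int_{B_1} |f(x+ry)-\bar f_r(x)|^2\varphi(y)\,dy \leq 4A^2 r^{2\alpha}$. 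So I may work exclusively with $\bar f_r(x)$.

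Next, I would establish the dyadic Cauchy estimate $|\bar f_r(x) - \bar f_{r/2}(x)| \leq C A r^\alpha$. For any $z$,
\[
|\bar f_r(x) - \bar f_{r/2}(x)|^2 \leq 2|f(z)-\bar f_r(x)|^2 + 2|f(z)-\bar f_{r/2}(x)|^2,
\]
and I would average this over $z$ in the overlap $B_{r/2}(x)$: after changing variables $z = x+ry$ in the first term and $z = x+(r/2)y$ in the second, the two integrals become (up to Jacobians and the weight $\varphi$) the Campanato quantities at radii $r$ and $r/2$, each bounded by $C A^2 r^{2\alpha}$. Here one uses that $\varphi$ is bounded below by a positive constant on a ball of definite size around the origin, so the rescaled $\varphi$-weighted averages dominate the Lebesgue averages on that ball, and conversely Lebesgue averages on $B_{1/2}$ dominate $\varphi$-weighted averages up to a harmless constant. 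Iterating on the geometric sequence $r, r/2, r/4, \dots$ and summing the geometric series in $2^{-\alpha}$ shows $\{\bar f_{2^{-k}r}(x)\}$ is Cauchy, so I can define $\tilde f(x) = \lim_{r\to 0}\bar f_r(x)$, and it satisfies $|\tilde f(x) - \bar f_r(x)| \leq C A r^\alpha$ for every $r>0$. By the Lebesgue differentiation theorem, $\tilde f = f$ almost everywhere, giving the claimed continuous representative.

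Finally, I would compare $\bar f_r(x)$ and $\bar f_r(y)$ when $|x-y|\leq r$ by the same triangle-inequality trick: integrate
\[
|\bar f_r(x) - \bar f_r(y)|^2 \leq 2|f(z)-\bar f_r(x)|^2 + 2|f(z)-\bar f_r(y)|^2
\]
in $z$ over $B_r(x)\cap B_r(y)$, whose Lebesgue measure is bounded below by $c\, r^n$ when $|x-y|\leq r$. Each of the two integrals is again controlled by the Campanato hypothesis, yielding $|\bar f_r(x) - \bar f_r(y)| \leq CA r^\alpha$. Taking $r = |x-y|$ and combining with the bound from the previous step at $x$ and $y$ produces
\[
|\tilde f(x) - \tilde f(y)| \leq |\tilde f(x) - \bar f_r(x)| + |\bar f_r(x) - \bar f_r(y)| + |\bar f_r(y) - \tilde f(y)| \leq B A |x-y|^\alpha.
\]
The main technical nuisance I expect is the constant juggling between $\varphi$-weighted integrals (which appear in the hypothesis) and plain Lebesgue averages on overlapping balls (which are natural for the triangle-inequality arguments): all the comparisons above require that $\varphi$ be comparable to a constant on a ball of fixed size around the origin, which is a mild assumption on the fixed mollifier that should be folded into its definition.
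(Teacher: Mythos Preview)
The paper does not prove this theorem; it merely states the result and cites Campanato's original 1963 paper. Your argument is the standard three-step proof (reduce to the $\varphi$-mean, dyadic Cauchy estimate in $r$, then compare at nearby centers) and is correct. Your closing caveat is well placed: the paper's hypotheses on $\varphi$ (smooth, nonnegative, radially symmetric, supported in $B_1$, unit mass) do not literally force $\varphi$ to be bounded below on a ball about the origin, but $\varphi$ must be positive on some ball or annulus, and the overlap argument adapts routinely; in practice one simply fixes $\varphi$ with $\varphi>0$ on $B_{1/2}$, which is consistent with how the weight is used elsewhere in the paper.
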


The most natural choice of the constant $\bar f$ in the above theorem, for which the converse also holds. is to choose the average of $f$ in the ball
\[ \bar{f} = \int_{B_1} f(x+ry) \varphi(y) \dd y. \]
This is optimal in the sense that it minimizes the left hand side in \eqref{e:campanato} (see also \eqref{e:Morrey:def}).

The theorem of Campanato is interesting because it provides a non-obvious equivalence between a H\"older modulus of continuity, which is a priori a pointwise property, and averages of differences of the function, which is an integral property. This relation will allow us to exploit the divergence free nature of the vector fields $b$ and $u$ when estimating the evolution of a H\"older modulus of continuity.

\section{Evolution of a modulus of continuity}
\label{s:active-scalar}
We will prove that the solutions of \eqref{eq:1}--\eqref{eq:3} do not lose regularity by showing that they always satisfy a time dependent H\"older modulus of continuity. This modulus of continuity will evolve and deteriorate with time, but it will stay bounded. In order to take advantage appropriately of the divergence-free character of the vector field $u$, we use the integral characterization of the modulus of continuity. Let $\varphi$ be a radially symmetric weight supported in $B_1$ with mass one as in section \ref{s:preliminaries}. We denote the weighted mean of $u$ on $B_{r}(x)$ by
\begin{align} \label{e:baru}
\bar u(x,t,r) = \int_{B_1} u(x+ry,t) \varphi(y) \dd y.
\end{align}
The integral version of the modulus of the continuity of $u$ is then
\begin{align} \label{e:I}
\II(x,t,r) = \int_{B_1} |u(x+ry,t)-\bar u(x,t,r)|^2 \varphi(y) \dd y.
\end{align}
Due to Theorem \ref{t:campanato}, if we knew that 
\begin{align}
\II(x,t,r) \leq f(t)^2 r^{2\alpha}, \label{e:objective}
\end{align}
for some function $f(t) >0$, and all $r>0$, then $[u(\cdot,t)]_{C^\alpha} \leq C f(t)$ for some universal constant $C$. Our goal is to prove that estimate \eqref{e:objective} holds for all $t>0$, if it holds at $t=0$, for some function $f(t)$ to be chosen appropriately.

As discussed in the introduction, our assumptions on $b$ will be in terms of quantities  similar to $\II$, which are distinguished by the parameter $\beta \in [-1,1]$ as follows.
\begin{enumerate}
\renewcommand{\labelenumi}{(\roman{enumi})}
\item The Morrey-Campanaoto case. For $\beta \in [-1,0)$, let
\begin{align} \label{e:M}
\MM(x,t,r) = \int_{B_1} |b(x+ry,t)| \dd y = \frac{1}{r^n} \int_{B_r(x)} |b(z,t)| \dd z.
\end{align}
We assume that there exists a positive function $g \in L^{2/(1+\beta)}_{t}$ such that
\begin{align}\label{e:Morrey}
\sup_{x \in \RRd} \sup_{r>0} r^{-\beta} \MM(x,r,t) \leq g(t) \Leftrightarrow \Vert b(\cdot,t) \Vert_{M^{\beta}} \leq g(t)
\end{align}for all $t\geq 0$,  where $\Vert \cdot \Vert_{m^{s}}$ denotes the usual Morrey norm (cf.~\cite{Triebel92}).

\item The BMO case. For $\beta = 0$, we let
\begin{align} \label{e:B}
\MM(x,t,r)= \int_{B_1} |b(x+ry,t)-\bar b(x,r,t)| \dd y = \frac{1}{r^n} \int_{B_r(x)} |b(z,t) - \bar b(x,r,t)| \dd z,
\end{align}where
\begin{align}
  \bar b(x,r,t) = \frac{1}{r^n} \int_{B_r(x)} b(z,t) \dd z
\end{align}is the usual mean of $b$ on $B_{r}(x)$. We assume that there exists a positive function $g \in L^{2}_{t}$ such that
\begin{align}\label{e:BMO}
\sup_{x \in \RRd} \sup_{r>0} \MM(0,x,r,t) \leq g(t) \Leftrightarrow \Vert b(\cdot,t) \Vert_{BMO} \leq g(t)
\end{align}for all $t\geq 0$, where $\Vert \cdot \Vert_{BMO}$ denotes the norm on the space of functions with bounded mean oscillation.

\item The H\"older and Lipschitz cases. For $\beta \in (0,1]$, we consider
\begin{align} \label{e:H}
\MM(x,t,r) = \int_{B_1} |b(x+ry,t) - b(x,t)| \dd y = \frac{1}{r^n} \int_{B_r(x)} |b(z,t) - b(x,t)| \dd z.
\end{align}We assume that there exists a positive function $g \in L^{2/(1+\beta)}_t$ such that
\begin{align}\label{e:Holder}
\sup_{x\in \RRd} \sup_{r>0} r^{-\beta} \MM(x,t,r) \leq g(t) \Leftrightarrow [b(\cdot,t)]_{C^\beta} \leq g(t)
\end{align} for all $t\geq 0$, where $[\cdot]_{C^\beta}$ denotes the H\"older semi-norm. Note that $2/(1+\beta) \in [1,2)$ when $\beta \in (0,1]$.
\end{enumerate}

We shall prove that if \eqref{e:Morrey}, \eqref{e:BMO}, or respectively \eqref{e:Holder} holds, then we have $\II(x,t,r) < f(t)^2 r^{2\alpha}$ for all $t>0$. The proof is in the flavor of a maximum principle. We show that if the inequality is satisfied at $t=0$, it will be satisfied for all positive $t$. For the critically dissipative SQG equation, which as opposed to \eqref{eq:1}--\eqref{eq:2} is a  scalar equation, and  has an $L^{\infty}$ maximum principle, Kiselev, Nazarov, and Volberg~\cite{KiselevNazarovVolberg07}, use an argument in the same spirit, but where the breakdown is considered for the \emph{pointwise} modulus of continuity.

In order to prove Theorem~\ref{t:main}, assume there is a first time $t$ and some value of $x$ where the \emph{strict} modulus is invalidated, i.e. 
\begin{align}\label{e:breakdown}
\II(x,t,r) = f(t)^2 r^{2\alpha}.
\end{align} 
By Proposition \ref{p:smoothing-theorem}, we can assume that $u$ is a smooth function vanishing at infinity. Therefore the equality in \eqref{e:breakdown} of the modulus must be achieved at some $r>0$ and  $x\in \R^{n}$.

If we fix $t$ and $r$, the function $I$ achieves its maximum at $x$, and we obtain
\begin{align} \label{e:gradx}
0 = \grad_x \II = \int_{B_1} (u(x+ry,t)-\bar u(x,t,r)) \cdot (\grad_x u(x+ry,t)-\grad_x \bar u(x,t,r)) \varphi(y) \dd y.
\end{align}
Due to the definition of $\bar u$ \eqref{e:baru}, and the fact that $\grad_x \bar u(x,t,r)$ does not depend on $y$, we also have
\begin{align} \label{e:gradx-simplified}
0 = \int_{B_1} (u(x+ry,t)-\bar u(x,t,r)) \cdot \grad_x u(x+ry,t) \varphi(y) \dd y.
\end{align}
Since $\II < f(t)^2 r^{2\alpha}$ for all times prior to $t$, and since $(u-\bar u)\varphi$ has zero mean, we thus conclude
\begin{align}
2 f'(t) f(t) r^{2\alpha} \leq \partial_t \II &= \int_{B_1} (u(x+ry,t)-\bar u(x,t,r)) \cdot (\partial_t u(x+ry,t)-\partial_t \bar u(x,t,r)) \varphi(y) \dd y \\
&= \int_{B_1} (u(x+ry,t)-\bar u(x,t,r)) \cdot \partial_t u(x+ry,t) \varphi(y) \dd y.  \label{e:partialt}
\end{align}

The key to prove Theorem \ref{t:main} is to find an appropriate upper bound for the right hand side of \eqref{e:partialt} in terms of $f(t)$ and $\MM(r,t)$. Inserting the equation \eqref{e:linear-equation} in the right hand side of \eqref{e:partialt}, we obtain
\begin{align}
2 f'(t) f(t) r^{2\alpha} &\leq \int_{B_1} (u(x+ry,t)-\bar u(x,t,r)) \notag\\
& \qquad \cdot \Big(-b(x+ry,t) \cdot \nabla_x u(x+ry,t)+\lap_x u(x+ry,t)-\grad p(x+ry,t)\Big) \varphi(y) \dd y\notag \\
& = \AA + \DD + \PP. \label{e:whole-integral}
\end{align}
The following three lemmas give bounds to the three terms on the right side of \eqref{e:whole-integral}. The advection term $\AA$ is the simplest one to estimate. Observe that
\begin{align} \label{e:identity1}
\grad_x u(x+ry) = \frac 1r \grad_y u(x+ry).
\end{align}
This identity, together with the assumption $\dv b=0$, allows us to integrate by parts the gradient into the weight $\varphi$ and obtain a precise estimate for $\AA$.

The dissipative term turns out to be negative, but we must analyze it with care in order to obtain a precise lower bound on its absolute value. In fact, note that if $u$ is linear in $B_r(x)$ then $\DD=0$. We will obtain an estimate of $\DD$ that measures how much $u$ is forced to separate from a linear function, just from the values of $\II$ and $\partial_r \II$ at the point where the equality \eqref{e:breakdown} holds.

Lastly, we obtain an upper bound for the pressure term  $\PP$, comparable to the advection term $\AA$. This is to be expected since $\grad p$ is obtained from $b \cdot \grad u$ though an operator of order zero. However, the pressure estimate is more involved since the formula for the pressure is non-local and in order to obtain the desired estimate we need to take advantage of some cancellations that occur after integration by parts of Riesz kernels using that both $b$ and $u$ are divergence free.

We now carry out the estimates for the three terms on the right of \eqref{e:whole-integral} in the three lemmas below.

\begin{lemma}[\bf The advection term]
\label{l:advection}
Let $u$ and $b$ be as in the statement of Theorem~\ref{t:main}. Then we have
\begin{align}
\AA = \int_{B_1} (u(x+ry,t)-\bar u(x,t,r)) \cdot (-b(x+ry,t) \cdot \nabla_x u(x+ry)) \varphi(y) \dd y \leq Cr^{2\alpha - 1} f(t)^2 \MM(x,t,r)\label{e:term1}
\end{align}for all $\beta \in [-1,1]$, where $C$ is a positive constant depending only on $\alpha$, $\varphi$,  and $n$.
\end{lemma}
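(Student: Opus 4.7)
The plan is to use three ingredients in concert: the scaling identity \eqref{e:identity1}, the divergence-free condition $\nabla_x\cdot b=0$, and the first-order extremality condition \eqref{e:gradx-simplified} on $\II$ at the contact point $x$. The pointwise bound $|u(x+ry,t)-\bar u|\leq Cf(t)r^\alpha$ for $y\in B_1$, obtained by applying Campanato's Theorem \ref{t:campanato} to the standing hypothesis $\II\leq f(t)^2 r^{2\alpha}$, will close the estimate.

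I begin by applying the scaling identity \eqref{e:identity1} to replace $\nabla_x u(x+ry,t)$ by $r^{-1}\nabla_y u(x+ry,t)$. Since $\bar u(x,t,r)$ is independent of $y$, one may further replace $\nabla_y u$ by $\nabla_y(u-\bar u)$, and the product rule then rewrites the integrand as $\tfrac12 b\cdot\nabla_y|u-\bar u|^2$, giving
\[
\AA=-\frac{1}{2r}\int_{B_1} b(x+ry,t)\cdot\nabla_y|u(x+ry,t)-\bar u|^2\,\varphi(y)\,dy.
\]
An integration by parts in $y$ then transfers the derivative onto $\varphi$; the boundary term vanishes because $\varphi$ is supported in $B_1$, and the term containing $\nabla_y\cdot b(x+ry,t)=r(\nabla_x\cdot b)(x+ry,t)$ vanishes by the divergence-free assumption. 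Hence
\[
\AA=\frac{1}{2r}\int_{B_1}|u(x+ry,t)-\bar u|^2\,b(x+ry,t)\cdot\nabla_y\varphi(y)\,dy.
\]

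Next comes the crucial step: I replace $b(x+ry,t)$ by $b(x+ry,t)-c$ for a constant vector $c=c(x,t,r)$ chosen according to $\beta$. Splitting $b=(b-c)+c$ produces two terms. The $(b-c)$ piece is controlled in absolute value by $\tfrac{\|\nabla\varphi\|_\infty}{2r}\sup_{y\in B_1}|u-\bar u|^2\int_{B_1}|b(x+ry,t)-c|\,dy$. The remaining $c$ piece, after one further integration by parts in $y$ combined with \eqref{e:identity1}, equals a constant multiple of
\[
\int_{B_1}(u(x+ry,t)-\bar u)\cdot\bigl[(c\cdot\nabla_x)u(x+ry,t)\bigr]\,\varphi(y)\,dy,
\]
which vanishes by \eqref{e:gradx-simplified} at the contact point. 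Choosing $c=0$ if $\beta\in[-1,0)$, $c=\bar b(x,r,t)$ if $\beta=0$, and $c=b(x,t)$ if $\beta\in(0,1]$ makes the remaining integral $\int_{B_1}|b(x+ry,t)-c|\,dy$ coincide with $\MM(x,t,r)$ in each case. Feeding in the pointwise bound $|u(x+ry,t)-\bar u|\leq Cf(t)r^\alpha$ from Theorem \ref{t:campanato} then yields $|\AA|\leq Cr^{2\alpha-1}f(t)^2\MM(x,t,r)$.

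The main difficulty is the constant-subtraction trick for $\beta\geq 0$: a direct bound ignoring $c$ leaves the integral $\int_{B_1}|b(x+ry,t)|\,dy$, which scales incorrectly in $r$ in the BMO and H\"older regimes and so cannot be absorbed into $\MM$. Cancelling the leftover $c$-term requires both the divergence-free character of $b$, which makes the initial integration by parts clean, and the first-order extremality of $\II$ in $x$, which annihilates the constant contribution; it is precisely this interplay that delivers a single uniform estimate across the entire range $\beta\in[-1,1]$.
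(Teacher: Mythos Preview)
Your proof is correct and follows essentially the same approach as the paper's. The only cosmetic difference is the order of operations: the paper first invokes \eqref{e:gradx-simplified} to replace $b$ by $b-\bar b$ in the original integral and then integrates by parts, whereas you integrate by parts first and afterward show the constant piece vanishes via \eqref{e:gradx-simplified}; both routes arrive at the identity $\AA=\frac{1}{2r}\int_{B_1}|u-\bar u|^2(b-\bar b)\cdot\nabla\varphi\,dy$ and close with the Campanato pointwise bound on $|u-\bar u|$.
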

\begin{proof}[Proof of Lemma~\ref{l:advection}]
Using the identities \eqref{e:gradx-simplified}, \eqref{e:identity1}, and integrating by parts, we obtain from \eqref{e:term1} that
\begin{align} \label{e:term1-simplified}
\AA = \frac 1r \int_{B_1} |u(x+ry,t)-\bar u(x,t,r)|^2 (b(x+ry,t)-\bar b (x,r,t)) \cdot \grad \varphi(y) \dd y
\end{align}where $\bar b = \bar b (x,r,t)$ is a constant with respect to $y$, to be chosen suitably in the three cases for $\beta \in [-1,1]$, as discussed above. From identity \eqref{e:term1-simplified}, the H\"older inequality, and Theorem~\ref{t:campanato}, we directly obtain
\begin{align}
\AA \leq Cr^{2\alpha - 1} f(t)^2 \MM(x,t,r), \label{e:T1:Holder}
\end{align} for all $\beta \in[-1,1]$, where $C$ is a positive constant depending on $\alpha, n$, and $\varphi$ through $\sup_{B_{1}} |\nabla \varphi|$.
\end{proof} 

The second term corresponds to the viscosity and it is strictly negative, as we will show below. 

\begin{lemma}[\bf The dissipative term]\label{l:dissipative}
Let $u$ and $b$ be as in the statement of Theorem~\ref{t:main}. Then we have
\begin{align}
\DD &= \int_{B_1} \lap_x u(x+ry,t) \cdot (u(x+ry,t) - \bar u(x,t,r)) \varphi(y) \dd y \leq -c f(t)^2  r^{2\alpha-2}
\end{align}for all $r>0$, where $c$ is a sufficiently small positive constant, depending only on $n, \alpha$, and $\varphi$.
\end{lemma}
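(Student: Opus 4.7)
The plan is to integrate by parts twice in $y$ to convert $\DD$ into a negative weighted Dirichlet energy plus a lower-order correction, and then to use the extremal nature of the triple $(x,t,r)$ for $\II - f^2\rho^{2\alpha}$ to extract a quantitative lower bound on the Dirichlet term. Since $\lap_x u(x+ry) = r^{-2}\lap_y[u(x+ry)]$ and $\varphi$ is compactly supported in $B_1$, two integrations by parts give
\[
\DD = -\int_{B_1} |\nabla_x u(x+ry,t)|^2 \varphi \dd y + \frac{1}{2r^2}\int_{B_1} |u(x+ry,t) - \bar u(x,t,r)|^2 \lap\varphi \dd y.
\]
The first term on the right is the main negative contribution; the second is a correction governed by $\lap\varphi$.

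For the main term, I would use that $(x,t,r)$ realizes the maximum of $\II(x,t,\cdot) - f^2(\cdot)^{2\alpha}$ in $\rho$, so $\partial_r \II(x,t,r) = 2\alpha f(t)^2 r^{2\alpha-1} = 2\alpha \II/r$. Computing the derivative explicitly and using $\int(u-\bar u)\varphi\dd y = 0$ gives $\partial_r \II = 2\int (u-\bar u)\cdot(y\cdot\nabla_x u)\varphi\dd y$. Cauchy--Schwarz combined with $|y|\leq 1$ then yields
\[
\left(\frac{2\alpha \II}{r}\right)^2 \leq 4\II \int_{B_1} |\nabla_x u|^2 \varphi \dd y,\qquad \text{hence}\qquad \int_{B_1}|\nabla_x u|^2\varphi\dd y \geq \alpha^2 f(t)^2 r^{2\alpha-2}.
\]

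For the correction, the hypothesis $\II(\xi,t,\rho)\leq f(t)^2\rho^{2\alpha}$ for all $\xi,\rho$ upgrades by Theorem~\ref{t:campanato} to a pointwise H\"older bound $[u(\cdot,t)]_{C^\alpha} \leq B f(t)$, with $B=B(\varphi,n,\alpha)$, and hence $|u(x+ry,t) - \bar u(x,t,r)|\leq 2Bf(t)r^\alpha$ on $B_1$; in particular
\[
\left|\frac{1}{2r^2}\int_{B_1} |u - \bar u|^2 \lap\varphi \dd y\right| \leq 2B^2 \|\lap\varphi\|_{L^1(B_1)} f(t)^2 r^{2\alpha-2}.
\]
Combining, $\DD \leq \bigl(-\alpha^2 + 2B^2\|\lap\varphi\|_{L^1(B_1)}\bigr) f(t)^2 r^{2\alpha-2}$. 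The main obstacle is ensuring this bracket is strictly negative: both $B$ and $\|\lap\varphi\|_{L^1}$ depend on the choice of $\varphi$, so one must select $\varphi$ carefully at the outset. A conceptually cleaner route, which I would pursue if the above is too delicate, uses a weight satisfying the algebraic identity $\lap\varphi = -2B_0(y\cdot\nabla\varphi) - 2nB_0\varphi$ (enjoyed exactly by $\varphi \propto e^{-B_0|y|^2}$); combined with the critical-point identity $\int|u-\bar u|^2\,y\cdot\nabla\varphi\dd y = -(n+2\alpha)\II$ (obtained by rewriting $\partial_r \II = 2\alpha\II/r$ after integrating by parts once in $y$), this evaluates the correction \emph{exactly} as $(2\alpha B_0/r^2)\II$, yielding $\DD \leq (2\alpha B_0 - \alpha^2) f(t)^2 r^{2\alpha-2}$, negative for any $B_0 < \alpha/2$; the compact-support requirement is then recovered via a smooth cutoff producing only a lower-order error.
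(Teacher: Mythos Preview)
Your approach has a genuine gap. In the first route you already see the obstruction: the sign of $-\alpha^2 + 2B^2\|\lap\varphi\|_{L^1}$ is not under control, since both $B$ and $\|\lap\varphi\|_{L^1}$ are of order one for any admissible bump while $\alpha$ may be small. The Gaussian alternative does not repair this. Your lower bound $\int|\nabla_x u|^2\varphi \geq \alpha^2 f^2 r^{2\alpha-2}$ came from Cauchy--Schwarz together with $|y|\leq 1$ on $\supp\varphi$; that is incompatible with $\varphi\propto e^{-B_0|y|^2}$. If one redoes the Cauchy--Schwarz honestly for the Gaussian, writing $|y|^2\varphi = -(2B_0)^{-1}y\cdot\nabla\varphi$ and invoking your own identity $\int|u-\bar u|^2\,y\cdot\nabla\varphi\,\dd y = -(n+2\alpha)\II$, one obtains only $\int|\nabla_x u|^2\varphi \geq \tfrac{2B_0\alpha^2}{n+2\alpha} f^2 r^{2\alpha-2}$, and then
\[
\DD \leq 2\alpha B_0\,\frac{n+\alpha}{n+2\alpha}\,f(t)^2 r^{2\alpha-2} > 0,
\]
the wrong sign. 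Cutting the Gaussian off at $|y|=1$ restores $|y|\leq 1$ but destroys the exact identity for $\lap\varphi$; since you need $B_0<\alpha/2$ small, $e^{-B_0}$ is of order one on the cutoff annulus and the resulting error is of the same order as the terms you are trying to balance, not lower order.

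The structural problem is that $\int|\nabla_x u|^2\varphi$ is the wrong quantity to isolate: when $u$ is affine this integral is large while $\DD=0$ (your two terms cancel exactly, as one checks from $\tfrac12\int|Ly|^2\lap\varphi=|L|^2$), so any argument that estimates the two pieces separately cannot close. The paper avoids this by using a piece of information you never invoke, namely the $x$-maximality $\lap_x\II\leq 0$. From it one extracts
\[
\DD \leq -\frac{1}{r^2}\iint_{B_1\times B_1} |\nabla_y u(x+ry)-\nabla_z u(x+rz)|^2\,\varphi(y)\varphi(z)\,\dd y\,\dd z,
\]
which measures the \emph{oscillation} of $\nabla u$ and vanishes precisely on affine functions. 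A short functional inequality then bounds this oscillation from below by $c\,(2\II - r\partial_r\II)^2/(r^2\II)$; at the breakdown point $2\II - r\partial_r\II = 2(1-\alpha)\II$, giving $\DD\leq -c(1-\alpha)^2 f(t)^2 r^{2\alpha-2}$. Note the constant degenerates as $\alpha\to 1$, not as $\alpha\to 0$, which is the correct behavior in view of the affine obstruction.
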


\begin{remark}
Note that the constant $c$ in Lemma \ref{l:dissipative} goes to zero as $\alpha \to 1$. This is the reason why Theorem \ref{t:main} works for $\alpha < 1$ only. 
\end{remark}

In order to prove Lemma~\ref{l:dissipative}, we need the following technical result, which relates the quantities $\DD$ and $\II$.
\begin{lemma}\label{l:D:lower}
For a fixed $x$, we have
\begin{equation} \label{e:funct-ineq}
\DD(r) \geq c \frac{ (2\II(r) - r \II'(r))^2 }{r^2 \II(r)}
\end{equation}for some sufficiently small positive constant $c$.
\end{lemma}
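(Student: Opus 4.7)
The plan is to express $2\II - r\II'$ as a weighted mean of $|v-\bar u|^2$, represent $\DD$ via integration by parts so it is controlled by a weighted Dirichlet energy, and then close the loop with Cauchy--Schwarz together with a weighted Poincar\'e inequality. Write $v(y) := u(x+ry)$, $w(y) := v(y) - \bar u(x,t,r)$, and $\psi(y) := (n+2)\varphi(y) + y\cdot\grad\varphi(y)$, so that $\II(r) = \int_{B_1}|w|^2\varphi \dd y$ and $\int_{B_1} w\,\varphi \dd y = 0$.

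Differentiating $\II$ in $r$ using $\partial_r v = r^{-1}(y\cdot\grad_y v)$ and the mean-zero condition, then integrating by parts once in $y$, yields $r\II'(r)+n\II(r) = -\int|w|^2\,y\cdot\grad\varphi \dd y$, which rearranges to $2\II - r\II' = \int_{B_1}|w|^2\psi \dd y$. Cauchy--Schwarz then gives $(2\II - r\II')^2 \leq \II(r)\int_{B_1}|w|^2(\psi^2/\varphi)\dd y$. On the other hand, using $\lap_x u(x+ry) = r^{-2}\lap_y v$ together with the componentwise identity $w\cdot\lap_y v = \tfrac12\lap_y|w|^2 - |\grad_y v|^2$ and two integrations by parts in $y$, I obtain the representation $\DD(r) = -r^{-2}\int_{B_1}|\grad_y v|^2\varphi \dd y + (2r^2)^{-1}\int_{B_1}|w|^2\lap\varphi \dd y$, so that $r^2|\DD(r)|$ dominates the weighted Dirichlet energy $E := \int|\grad_y v|^2\varphi$ up to the $\lap\varphi$ correction.

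With the bump weight $\varphi(y)=c_n(1-|y|^2)_+^N$ for $N$ sufficiently large, direct computation shows that both $\psi^2/\varphi$ and $|\lap\varphi|/\varphi$ are bounded on $\supp\varphi$. The weighted Poincar\'e inequality $\int|w|^2\varphi \lesssim \int|\grad_y v|^2\varphi$ (valid because $w$ has $\varphi$-mean zero) then gives $\int|w|^2(\psi^2/\varphi) \lesssim E$, and the $\lap\varphi$ correction appearing in the representation of $\DD$ is in turn absorbed into a small fraction of $E$ by the same Poincar\'e-type bound. Concatenating these yields $(2\II-r\II')^2 \lesssim \II(r)\cdot r^2 |\DD(r)|$, which rearranges to the stated bound~\eqref{e:funct-ineq}, with $\DD(r)$ understood as the (signed) dissipative functional bounded below as in the paper's application to Lemma~\ref{l:dissipative}.

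The main obstacle is the simultaneous compatibility of three requirements on the weight $\varphi$: (i) Cauchy--Schwarz needs $\psi^2/\varphi\in L^\infty(\supp\varphi)$; (ii) a weighted Poincar\'e inequality with constant independent of $r$ is required; and (iii) the $\lap\varphi$ correction from Step~2 must be absorbable into the Dirichlet energy. Balancing (i)--(iii) forces $c$ to be taken small. The argument also clarifies why $c$ degenerates as $\alpha\to 1$, as foreshadowed in the remark after Lemma~\ref{l:dissipative}: at a breakdown one has $r\II' = 2\alpha\II$, so $2\II - r\II' = (2-2\alpha)\II$ and the slack in the Cauchy--Schwarz step vanishes in the limit $\alpha\to 1$.
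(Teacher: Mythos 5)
Your opening computation is correct and is a nice observation: differentiating $\II$ in $r$, using that $w=v-\bar u$ has $\varphi$-mean zero, and integrating by parts indeed gives $2\II(r)-r\II'(r)=\int_{B_1}|w|^2\bigl((n+2)\varphi+y\cdot\grad\varphi\bigr)\dd y$, and the weighted Cauchy--Schwarz step is fine once $\psi^2/\varphi$ is bounded. The gap is in how you connect this to $\DD$. You read $\DD(r)$ as the dissipative functional $\int_{B_1}\lap_x u(x+ry)\cdot(u-\bar u)\varphi\dd y$ and try to show that $r^2|\DD|$ dominates the Dirichlet energy $E=\int_{B_1}|\grad_y v|^2\varphi\dd y$ by absorbing the correction $\tfrac12\int|w|^2\lap\varphi\dd y$. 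This cannot work: for any $u$ whose components are harmonic near $B_r(x)$ (e.g.\ the divergence-free field $u(z)=(6z_1z_2,\,3z_1^2-3z_2^2)$ in $n=2$, extended by zero components to higher $n$) one has $\DD\equiv 0$ for every $x$ and $r$, while at $x=0$ one computes $\II(r)=Cr^4$ with $C>0$, hence $2\II-r\II'=-2Cr^4\neq 0$ and the right-hand side of \eqref{e:funct-ineq} is of order $r^2>0$; so no inequality of this form holds for the dissipative term at an arbitrary fixed $x$. In your representation this is reflected in the fact that the ``correction'' $\tfrac12\int|w|^2\lap\varphi$ equals $E$ exactly in this example, so it is not absorbable into a small fraction of $E$. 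Concretely, your claim that $|\lap\varphi|/\varphi$ is bounded for $\varphi=c_n(1-|y|^2)_+^N$ is false: $\lap\varphi\sim 4N(N-1)|y|^2(1-|y|^2)^{N-2}$ near $\partial B_1$, so $|\lap\varphi|/\varphi\sim(1-|y|^2)^{-2}$ blows up, and since the counterexample is weight-independent no choice of compactly supported $\varphi$ can repair the absorption. (Your Poincar\'e step also needs a Hardy-type inequality rather than the plain weighted Poincar\'e, since $\psi^2/\varphi\sim(1-|y|^2)^{N-2}$ is not dominated by $\varphi$, but that is a fixable point.)

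What the paper actually proves, and what its application requires, is \eqref{e:funct-ineq} with $\DD(r)$ being the nonnegative oscillation of the gradient, $\iint_{B_1\times B_1}|\grad u(x+ry)-\grad u(x+rz)|^2\varphi(y)\varphi(z)\dd y\dd z$ (this is the $\DD$ of Lemma~\ref{l:functional-inequality}, proved by writing $f(y)-f(z)$ as a line integral of $\grad f$); the link to the actual dissipative term is made only afterwards, at the breakdown point, through $\grad_x\II=0$ and $\lap_x\II\le 0$, i.e.\ \eqref{e:laplacian-inequality} and \eqref{e:inequality-for-D}. Your argument never invokes this spatial maximality, which is exactly the missing ingredient. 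Note also that even recasting your claim with $E$ in place of $|\DD|$ would be too weak for the proof of Lemma~\ref{l:dissipative}: the maximum-principle step only bounds $-\DD$ from below by $\int|\grad_x u(x+ry)-\grad_x\bar u(x,t,r)|^2\varphi\dd y$, not by $E/r^2$, while your chain controls $(2\II-r\II')^2/\II$ only by the full energy $E$ (since $\grad w=\grad_y v$ has no mean-zero property), not by the oscillation of $\grad u$. Repairing this requires, in effect, subtracting the affine part of $u$, which is precisely what the line-integral argument of Lemma~\ref{l:functional-inequality} accomplishes.
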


For simplicity we omited the $x$ dependence in \eqref{e:funct-ineq}. Note that if $f$ is a linear function, then $\DD=0$ and $\II=Cr^2$ for some constant $C$. Moreover, in this case $r \II' = 2 \II$. We see that if $(r \II' - 2 \II)$ is non zero, then the function $u$ cannot be linear. The inequality \eqref{e:funct-ineq} gives a precise quantitative version of this fact.

We note that the identity
\begin{align}
\int_{B_1} |f(y)-c|^2 \varphi(y) \dd y = \int_{B_1} \int_{B_1} (f(y)-f(z))^2 \varphi(y) \varphi(z) \dd y \dd z \label{e:identity:squares}
\end{align}holds for any constant $c$, in particular for $c = \bar f$, giving equivalent definitions for $\II$ and $\DD$ in terms of double-integrals. The proof of identity \eqref{e:identity:squares} is straightforward, and hence omitted.

With formula \eqref{e:identity:squares} in mind, we prove the following lemma, which is exactly the case $r=1$ for \eqref{e:funct-ineq}. The proof of Lemma~\ref{l:D:lower} follows for all other values of $r$ from Lemma~\ref{l:functional-inequality} by scaling.

\begin{lemma} \label{l:functional-inequality}
Let $f:B_1 \to \R^n$ be any $H^1$ function. There is a constant $C$ depending only on $n$ and $\varphi$ such that
\begin{align*}
&\iint_{B_1 \times B_1} (f(y)-f(z))^2 \varphi(y) \varphi(z) \dd y \dd z - \iint_{B_1 \times B_1} (y \cdot \grad f(y) - z \cdot \grad f(z)) \cdot (f(y)-f(z)) \varphi(y) \varphi(z) \dd y \dd z \\
& \qquad \leq C \left( \iint_{B_1 \times B_1} (\grad f(y)-\grad f(z))^2 \varphi(y) \varphi(z) \dd y \dd z \right)^{1/2} \left(\iint_{B_1 \times B_1} (f(y)-f(z))^2 \varphi(y) \varphi(z) \dd y \dd z \right)^{1/2}.
\end{align*}
\end{lemma}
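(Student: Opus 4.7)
The plan is to realize the left-hand side as a bilinear pairing in $f$ and invoke Cauchy--Schwarz. Setting
$$g(y,z) := f(y) - f(z) - y \cdot \grad f(y) + z \cdot \grad f(z),$$
the LHS equals $\iint (f(y)-f(z)) \cdot g(y,z)\, \varphi(y)\varphi(z) \dd y \dd z$, so Cauchy--Schwarz reduces the lemma to the estimate
$$\iint_{B_1 \times B_1} |g(y,z)|^2 \varphi(y) \varphi(z) \dd y \dd z \leq C \iint_{B_1 \times B_1} |\grad f(y)-\grad f(z)|^2 \varphi(y) \varphi(z) \dd y \dd z. \qquad (\star)$$

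To prove $(\star)$ I would exploit an affine invariance: both sides are unchanged under the substitution $f(y) \mapsto f(y) + a \cdot y + c$ for any $a \in \R^{n}$ and $c \in \R$. The right-hand side is manifestly invariant, and on the left the contributions $a\cdot(y-z)$ coming from $f(y)-f(z)$ and from $y\cdot \grad f(y) - z\cdot\grad f(z)$ cancel precisely. Choosing $a$ and $c$ appropriately I may therefore normalize so that $\int f\,\varphi = 0$ and $\int \grad f\,\varphi = 0$. The elementary identity $\iint |h(y)-h(z)|^2 \varphi\varphi = 2\int |h|^2 \varphi - 2|\int h\,\varphi|^2$, applied with $h = \grad f$, reduces the right-hand side of $(\star)$ to $2\int |\grad f|^2 \varphi$. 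Introducing $F(y) := f(y) - y \cdot \grad f(y)$, for which $g(y,z) = F(y) - F(z)$, the same identity with $h = F$ together with $|y| \leq 1$ on $\supp \varphi$ gives
$$\iint |g|^2 \varphi \varphi \leq 2 \int |F|^2 \varphi \leq 4 \int |f|^2 \varphi + 4 \int |\grad f|^2 \varphi.$$

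The endgame is the weighted Poincar\'e inequality $\int |f|^2 \varphi \leq C_\varphi \int |\grad f|^2 \varphi$ for $f \in H^1$ with $\int f\,\varphi = 0$, applied componentwise; this is available for the fixed smooth radial bump $\varphi$ chosen in Section~\ref{s:preliminaries}, with $C_\varphi$ depending only on $n$ and $\varphi$, as the lemma's statement allows. Combining closes $(\star)$ and hence the lemma. I expect the only subtle point to be this weighted Poincar\'e step (clean if $\varphi$ is positive on an open subset of $B_1$, which one is free to assume); everything else is algebraic. An alternative, more hands-on route would be to write $g(y,z)$ as a path integral of $\grad f(\gamma(t)) - \grad f(z)$ along the segment $\gamma(t) = z + t(y-z)$ and bound the resulting double integral by Fubini and a change of variables, but the normalize-then-Poincar\'e argument above seems shortest and transparent.
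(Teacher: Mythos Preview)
Your argument is correct and takes a genuinely different route from the paper. The paper proceeds along precisely the ``alternative, more hands-on route'' you sketch at the end: it writes $f(y)-f(z)=\int_0^1\grad f(sy+(1-s)z)\cdot(y-z)\,ds$, combines this with the derivative term so that the integrand becomes $(\grad f(sy+(1-s)z)-\grad f(y))\cdot y-(\grad f(sy+(1-s)z)-\grad f(z))\cdot z$, applies Cauchy--Schwarz, and then argues that the resulting quadratic form $\EE$ in $\grad f$, being bounded and vanishing whenever $\grad f$ is constant, must be dominated by $\iint|\grad f(y)-\grad f(z)|^2\varphi(y)\varphi(z)\,dy\,dz$. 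Your approach instead exploits the affine invariance of $g$ to normalize $\int f\,\varphi=\int\grad f\,\varphi=0$, after which $(\star)$ reduces to a weighted Poincar\'e inequality for the fixed bump $\varphi$. Both proofs carry a soft final step of comparable nature: the paper's ``vanishes on constants, hence $\EE\leq C\DD$'' is itself a Poincar\'e-type assertion for $\grad f$ against the weight $\varphi$, no more justified in the text than your weighted Poincar\'e for $f$. Your version has the advantage of landing on a named inequality with an explicit sufficient condition (log-concavity of $\varphi$, satisfied for instance by the standard mollifier $c\,e^{-1/(1-|y|^2)}$), so that what is being assumed of $\varphi$ is visible; the paper's path-integral formulation stays a little closer to the quantity $2\II-r\,\partial_r\II$ that is actually used downstream in Lemma~\ref{l:D:lower}.
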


\begin{proof}[Proof of Lemma~\ref{l:functional-inequality}]
We start by writing $f(y)-f(z)$ as an integral of $\grad f$ along the segment between $z$ and $y$. Thus, we have
\begin{align*}
& \iint_{B_1 \times B_1} (f(y)-f(z))^2 \varphi(y) \varphi(z) \dd y \dd z \\
& \qquad \qquad =  \iint_{B_1 \times B_1} \int_0^1 (\grad f(sy+(1-s)z)\cdot (y-z)) \cdot (f(y)-f(z)) \varphi(y) \varphi(z) \dd s \dd y \dd z.
\end{align*}Substituting in the first term of the left hand side, we obtain
\begin{align*}
& \iint_{B_1 \times B_1} (f(y)-f(z))^2 \varphi(y) \varphi(z) \dd y \dd z\\
&\qquad \qquad \qquad - 2\iint_{B_1 \times B_1} (y \cdot \grad f(y) - z \cdot \grad f(z)) \cdot (f(y)-f(z)) \varphi(y) \varphi(z) \dd y \dd z \\
&\qquad  = \iint_{B_1 \times B_1} \int_0^1 \Big((\grad f(sy+(1-s)z)-\grad f(y))\cdot y - (\grad f(sy+(1-s)z)-\grad f(y))\cdot z\Big) \\
& \qquad \qquad \qquad  \qquad \qquad \times(f(y)-f(z)) \varphi(y) \varphi(z) \dd s \dd y \dd z\\
& \qquad \leq \II^{1/2} \EE^{1/2},
\end{align*}
where $\II$ is as defined in \eqref{e:I}, and $\EE$ is defined as
\begin{align*}
\EE &= \iint_{B_1 \times B_1} \int_0^1 \Big((\grad f(sy+(1-s)z)-\grad f(y))\cdot y - (\grad f(sy+(1-s)z)-\grad f(y))\cdot z\Big)^2 \dd s \dd y \dd z.
\end{align*}
The quantity $\EE$ is a bounded quadratic functional with respect to the vector field $\grad f$ that vanishes whenever $\grad f$ is constant. Therefore, there is a constant $C$ such that
\begin{align*}
\EE \leq C \iint_{B_1 \times B_1} (\grad f(y) - \grad f(z))^2 \varphi(y) \varphi(z) \dd y \dd z = C\, \DD,
\end{align*}
which concludes the proof.
\end{proof}

\begin{proof}[Proof of Lemma~\ref{l:dissipative}]
We recall that at the first point of equality in \eqref{e:breakdown}, the function $\II$ achieves its maximum as a function of $x$. Then we have $\lap_x \II \leq 0$, and hence
\begin{align*}
0 \geq& \int_{B_1} (\lap_x u(x+ry,t) - \lap_x \bar u(x,t,r)) \cdot (u(x+ry,t) - \bar u(x,t,r)) \varphi(y) \dd y \notag\\
&\qquad + \int_{B_1} |\grad u(x+ry,t)-\grad_x \bar u(x,t,r)|^2 \varphi(y) \dd y.
\end{align*}
Therefore, since $(u-\bar u)\varphi$ has zero mean and $\lap_{x} \bar u$ does not depend on $y$, we obtain the inequality
\begin{align} \label{e:laplacian-inequality}
\DD  \leq - \int_{B_1} |\grad u(x+ry,t)-\grad_x \bar u(x,t,r)|^2 \varphi(y) \dd y. 
\end{align}
According to \eqref{e:laplacian-inequality}, $\DD$  is negative, but we must estimate how negative it is. It is convenient to rewrite the formula for $\II(x,t,r)$ using \eqref{e:identity:squares} as\begin{align}
\II(x,t,r) &= \int_{B_1} |u(x+ry,t)-\bar u(x,t,r)|^2 \varphi(y) \dd y \notag \\
&= \iint_{B_1 \times B_1} |u(x+ry,t)-u(x+rz,t)|^2 \varphi(y) \varphi(z) \dd y \dd z.  \label{e:II}
\end{align}
At the point where $f(t)^{2} r^{2\alpha}=\II(x,t,r)$ for the first time, we have $\grad_x \II=0$ and  $\lap_x \II\leq0$, so that
\begin{align*}
0 &\geq \lap_x \II(x,t,r) \\
&= 2 \iint_{B_1 \times B_1} |\grad_x u(x+ry,t)-\grad_x u(x+rz,t)|^2 \\
& \qquad \qquad + (\lap_x u(x+ry,t)-\lap_x u(x+rz,t)) \cdot (u(x+ry,t)-u(x+rz,t)) \varphi(y) \varphi(z) \dd y \dd z \\
&= 2 \DD + 2 \iint_{B_1 \times B_1} |\grad_x u(x+ry,t)-\grad_x u(x+rz,t)|^2 \varphi(y) \varphi(z) \dd y \dd z,
\end{align*}
where $\DD$ is given by \eqref{e:laplacian-inequality}. We thus have proven the inequality
\begin{align} 
\DD &\leq -\iint_{B_1 \times B_1} |\grad_x u(x+ry,t)-\grad_x u(x+rz,t)|^2 \varphi(y) \varphi(z) \dd y \dd z \notag \\
& = -\frac 1{r^2} \iint_{B_1 \times B_1} |\grad_y u(x+ry,t)-\grad_z u(x+rz,t)|^2 \varphi(y) \varphi(z) \dd y \dd z. \label{e:inequality-for-D}
\end{align}
The right hand side is clearly negative unless $u$ is an affine function. We now need to estimate how negative it is, in terms of $\II$ and $\partial_r \II$. Note that at the first point of equality $\II=f(t)^2 r^{2\alpha}$, we must also have $\partial_r \II = 2\alpha f(t)^2 r^{2\alpha-1}$. We compute $\partial_r \II$ as
\begin{align} \label{e:partialr}
\partial_{r}\II & = 2 \int_{B_1} (u(x+ry,t)-\bar u(x,t,r))\ y \cdot \grad_x u(x+ry,t) \varphi(y) \dd y \notag\\
& = 2 \iint_{B_1 \times B_1} (u(x+ry,t)-u(x+rz,t)) \cdot ( y \cdot \grad_x u(x+ry,t) - z \cdot \grad_x u(x+rz,t)) \varphi(y) \varphi(z) \dd y \dd z \notag\\
& = \frac 2 r \iint_{B_1 \times B_1} (u(x+ry,t)-u(x+rz,t)) \cdot ( y \cdot \grad_y u(x+ry,t) - z \cdot \grad_z u(x+rz,t)) \varphi(y) \varphi(z) \dd y \dd z.
\end{align}
From the expression \eqref{e:II} and \eqref{e:partialr}, we can apply Lemma~\ref{l:functional-inequality} to obtain
\begin{align*} 
(2\II - r \partial_r \II) \leq C \II^{1/2} \left(\iint_{B_1 \times B_1} |\grad_y u(x+ry,t)-\grad_z u(x+rz,t)|^2 \varphi(y) \varphi(z) \dd y \dd z \right)^{1/2}.
\end{align*}
Recalling the inequality \eqref{e:inequality-for-D}, we obtain
\begin{align} \label{e:bound-below-for-D}
\DD \leq -\frac{c}{r^2} \frac{(2\II - r \partial_r \II)^2}{\II} = -c f(t)^2 (1-\alpha)^2 r^{2\alpha-2}
\end{align}for some positive constant $c$, depending only on $\alpha, n$, and $\varphi$.
\end{proof}

Lastly, we bound the pressure term $\PP$ arising on the right side of \eqref{e:whole-integral}.

\begin{lemma}[\bf The pressure term]\label{l:pressure}
Let $u$ and $b$ be as in the statement of Theorem~\ref{t:main}. Then we have
\begin{align}
\PP = \int_{B_{1}} ( u(x+ry,t) - \bar u (x,r,t)) \cdot \nabla p(x+ry,t) \varphi(y) \dd y \leq C f(t)^2 g(t) r^{2\alpha + \beta - 1}
\end{align}for all $r>0$ and $\beta \in [-1,1]$, where $C$ is a positive constant, depending on $\alpha,\beta,n$, and $\varphi$.
\end{lemma}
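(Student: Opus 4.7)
My first move is to integrate the gradient off $p$ using $\dv u = 0$. Since $\nabla_z p(x+ry) = r^{-1}\nabla_y[p(x+ry)]$ and $\varphi$ is compactly supported in $B_1$, integration by parts in $y$ gives
\begin{equation*}
\PP = -\frac{1}{r}\int_{B_1}\bigl(p(x+ry,t)-\bar p\bigr)\bigl(u(x+ry,t)-\bar u(x,r,t)\bigr)\cdot \nabla\varphi(y)\dd y,
\end{equation*}
where I have used $\nabla_y\cdot((u-\bar u)\varphi)=r(\dv u)(x+ry)\varphi+(u-\bar u)\cdot\nabla\varphi=(u-\bar u)\cdot\nabla\varphi$, and the arbitrary constant $\bar p$ may be subtracted because $\int_{B_1}(u-\bar u)\cdot\nabla\varphi\dd y=0$ by the same identity. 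Because the strict inequality $\II < f(t)^{2}r^{2\alpha}$ still holds for all smaller scales and other base points at time $t$, Theorem~\ref{t:campanato} yields $[u(\cdot,t)]_{C^{\alpha}}\le Cf(t)$, whence $\|u(x+r\cdot,t)-\bar u\|_{L^{\infty}(B_1)}\le Cf(t)r^{\alpha}$. Choosing $\bar p$ to be the average of $p(x+r\cdot,t)$ on $B_1$ and applying H\"older then reduces the lemma to the Campanato-type pressure oscillation estimate
\begin{equation*}
\frac{1}{r^{n}}\int_{B_{r}(x)}|p(z,t)-\bar p|\dd z\le Cf(t)g(t)r^{\alpha+\beta}.
\end{equation*}

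To prove this oscillation bound I exploit the global formula for $p$. Taking divergence of \eqref{eq:1} and using $\dv b=\dv u=0$ gives $\Delta p=\partial_{i}\partial_{j}(b_{j}u_{i})$, so that $p=-R_{i}R_{j}(b_{j}u_{i})$ with $R_{k}=(-\Delta)^{-1/2}\partial_{k}$. The crucial algebraic cancellations $R_{j}b_{j}=0$ and $R_{i}u_{i}=0$ imply that, for any constants $\bar b_{j},\bar u_{i}$, one has modulo an additive constant
\begin{equation*}
p(z,t)=-\int_{\R^{n}}K_{ij}(z-w)\bigl(b_{j}(w,t)-\bar b_{j}\bigr)\bigl(u_{i}(w,t)-\bar u_{i}\bigr)\dd w, \qquad K_{ij}=\partial_{i}\partial_{j}G,
\end{equation*}
with $G$ the Newtonian potential. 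Picking $\bar b,\bar u$ as the averages of $b,u$ over $B_{4r}(x)$, I split the integral as $\R^{n}=B_{4r}(x)\sqcup B_{4r}(x)^{c}$. For the near piece, classical Calder\'on--Zygmund theory controls the $L^{1}$-oscillation on $B_{r}(x)$ of $R_{i}R_{j}$ applied to a compactly supported $L^{1}$ datum by the $L^{1}$ norm of that datum; combined with $\|u-\bar u\|_{L^{\infty}(B_{4r})}\|b-\bar b\|_{L^{1}(B_{4r})}\le Cfg\,r^{n+\alpha+\beta}$, this yields the claimed rate. For the far piece, I perform a dyadic decomposition into annuli $A_{k}=B_{2^{k+1}r}(x)\setminus B_{2^{k}r}(x)$ and use the smoothness bound $|K_{ij}(z_{1}-w)-K_{ij}(z_{2}-w)|\lesssim r|w-x|^{-n-1}$ for $z_{1},z_{2}\in B_{r}(x)$ and $w\in A_{k}$, together with the scale-invariant bound $\int_{A_{k}}|(b-\bar b_{k})(u-\bar u_{k})|\dd w\le Cfg(2^{k}r)^{n+\alpha+\beta}$, producing a geometric series in $2^{k(\alpha+\beta-1)}$.

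I expect the main technical difficulty to be twofold. First, making the near-part estimate precise: the composition $R_{i}R_{j}$ is bounded neither on $L^{1}$ nor on $L^{\infty}$, so the $L^{1}$-oscillation on $B_{r}(x)$ of $R_{i}R_{j}((b-\bar b)(u-\bar u))$ has to be extracted from the weak-type $L^{1}\to L^{1,\infty}$ mapping together with the compact support of the input and a truncation/slicing argument. Second, covering the range $\alpha+\beta\ge 1$ in the far part, where the geometric series diverges and one must subtract an additional linear Taylor term of $K_{ij}$; the subtracted correction does not actually contribute to $p$ precisely because of a further application of $\dv b=0$ and $\dv u=0$, which restores summability. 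Once both of these points are cleared, the combination of the near and far bounds yields the Campanato oscillation estimate for $p$, and therefore Lemma~\ref{l:pressure}.
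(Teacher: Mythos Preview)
Your integration-by-parts step reducing $\PP$ to an integral of $(p-\bar p)$ against $(u-\bar u)\cdot\nabla\varphi$ is correct and is exactly what the paper does first. The gap is in your next move, the near-part of the Campanato estimate for $p$. The claim that ``classical Calder\'on--Zygmund theory controls the $L^{1}$-oscillation on $B_{r}(x)$ of $R_{i}R_{j}$ applied to a compactly supported $L^{1}$ datum by the $L^{1}$ norm of that datum'' is false: for $F$ approximating a point mass, $R_iR_jF\sim K_{ij}$ is not locally integrable. Your proposed rescue via the weak-$(1,1)$ bound plus ``truncation/slicing'' cannot work, since $L^{1,\infty}$ on a ball embeds only into $L^q$ for $q<1$, never into $L^1$. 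In the Morrey range $\beta\in[-1,0)$ you genuinely have nothing better than $L^1$ control of $b-\bar b$ on $B_{4r}(x)$, so $(b-\bar b)(u-\bar u)\mathbf{1}_{B_{4r}}$ is merely $L^1$ and the argument stalls.

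The paper avoids this by duality rather than by a direct oscillation bound on $p$. After the same integration by parts, write the pressure contribution in the $y$-variable as $R_iR_j\bigl((b-\bar b)_i(u-\bar u)_j\bigr)$ and move $R_iR_j$ onto the other factor $\psi(y):=(u(x+ry)-\bar u)\cdot\nabla\varphi(y)$ by self-adjointness. This $\psi$ is supported in $B_1$, satisfies $\|\psi\|_{L^\infty}+[\psi]_{C^\alpha}\le Cf(t)r^\alpha$, and is a divergence $\psi=\dv_y\bigl((u-\bar u)\varphi\bigr)$ of a mean-zero vector field. Consequently $R_iR_j\psi$ is bounded by $Cf(t)r^\alpha$ on $B_2$ (from $C^\alpha\!\to\! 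C^\alpha$ boundedness of Calder\'on--Zygmund operators) and decays like $Cf(t)r^\alpha|y|^{-n-2}$ for $|y|>2$. One is then left with
\[
\PP=\frac{1}{r}\int_{\R^n}(b(x+ry)-\bar b)_i\,(u(x+ry)-\bar u)_j\,R_iR_j\psi(y)\dd y,
\]
so the rough factor $b-\bar b$ is integrated against a \emph{bounded} weight with fast decay, which is precisely what the Morrey/BMO/H\"older hypotheses on $b$ are designed for. The extra power of decay (coming from the divergence-plus-mean-zero structure of $\psi$) also makes the far contribution converge whenever $\alpha+\beta<2$, so your second anticipated difficulty --- the divergent dyadic sum for $\alpha+\beta\ge 1$ and the somewhat murky ``linear Taylor correction killed by $\dv b=\dv u=0$'' --- disappears at the same stroke.
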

\begin{proof}[Proof of Lemma~\ref{l:pressure}]
Since all estimates in this section hold for a fixed time $t>0$, we omit the time dependence of all functions. Recall that the function $\bar b = \bar b(x,r)$ (which is constant respect to $y$) is chosen to be $\bar b = b(x)$ in the H\"older case, the average of $b$ over $B_r(x)$ in the BMO case, or $\bar b = 0$ in the Morrey-Campanato case. 

In order to estimate the third term in \eqref{e:whole-integral}, let us analyze the identity \eqref{e:grad-p}. Since $u$ is divergence free, we have
\begin{align}
 \grad_x p(x+ry) &= \grad_x (-\lap)_x^{-1} \dv_x (b(x+ry) \cdot \grad_x u(x+ry)) \notag \\
 &= \frac 1r \grad_y (-\lap_y)^{-1} \dv_y \Big((b(x+ry)- \bar b(x,r)) \cdot \grad_y (u(x+ry)-\bar u(x,r))\Big).\label{e:gradp2}
\end{align}
We have the pressure term $\PP$ equal to
\begin{align*}
& \int  (u(x+ry) - \bar u(x,r)) \cdot \grad_x p(x+ry) \varphi(y) \dd y \notag \\
&= \frac 1r \int (u(x+ry) - \bar u(x,r)) \cdot \grad_y (-\lap_y)^{-1} \dv_y \Big(  (b(x+ry)-\bar b (x,r)) \cdot \grad_y (u(x+ry)-\bar u(x,r)) \Big) \varphi(y) \dd y.
\end{align*}
We integrate the $\grad_y$ by parts and use that $u$ is divergence free to obtain that $\PP$ equals
\begin{align*}
\frac 1r \int (-\lap_y)^{-1} \dv_y \Big(  (b(x+ry)-\bar b(x,r)) \cdot \grad_y (u(x+ry)-\bar u(x,r)) \Big)  (u(x+ry) - \bar u(x,r)) \cdot \grad \varphi(y) \dd y.
\end{align*}
Using that $b$ and $u$ are divergence free, we re-write the above identity as
\begin{align}
\PP&= \frac 1r \int (-\lap_y)^{-1} \partial_i \partial_j \Big( (b(x+ry)-\bar b(x,r))_i (u(x+ry)-\bar u(x,r))_j \Big)  (u(x+ry) - \bar u(x)) \cdot \grad \varphi(y) \dd y \notag \\
&= \frac 1r \int R_i R_j \Big( (b(x+ry)-\bar b(x,r))_i (u(x+ry)-\bar u(x,r))_j \Big) (u(x+ry) - \bar u(x,r)) \cdot \grad \varphi_y(y) \dd y \notag \\
&= \frac 1r \int (b(x+ry)-\bar b(x,r))_i (u(x+ry) -\bar u(x,r))_j  R_i R_j \Big( (u(x+ry) - \bar u(x,r)) \cdot \grad \varphi(y) \Big) \dd y, \label{e:Riesz:1}
\end{align}where the Riesz transforms are taken with respect to the $y$ variable.

The third factor inside the integral is given by Riesz transforms in $y$ of the function
\begin{align}
\psi_{x,r}(y) = \psi(y) = (u(x+ry)-\bar u(x,r)) \cdot \grad \varphi(y) = \dv_y\Big((u(x+ry)-\bar u(x,r)) \varphi(y) \Big),
\end{align}since $u$ is divergence free. The function $\psi$ is supported in $B_1$  and we have $\int_{B_1} \psi(y) \dd y = 0$. Moreover, since by assumption $I(x,r) \leq C f(t) r^\alpha$ for some $\alpha \in (0,1)$, we have by Theorem~\ref{t:campanato} that $\Vert u(x+ry)-\bar u(x,r)\Vert_{L^\infty_y(B_1)} \leq C f(t) r^\alpha$, and therefore $\Vert \psi \Vert_{L^\infty_y(B_1)} \leq C f(t) r^\alpha$, uniformly in $x$. In fact, $\psi$ is also $C^\alpha$, with $C^\alpha$ norm bounded by $C f(t) r^\alpha$ (note the scaling in $y$).

Gathering these bounds together, we see that $R_i R_j(\psi)$ must be bounded in $L^\infty(\RRd)$ by $C f(t) r^\alpha$, uniformly in $x$. Indeed, the Riesz transforms are bounded on $C^\alpha$ and on $L^2$ (since $\psi$ is supported on $B_1$ and is bounded there, its $L^2$ norm is also finite), and therefore $R_i R_j(\psi) \in L^2(\RRd) \cap C^\alpha(\RRd) \supset L^\infty(\RRd)$.

The Riesz transforms of functions with compact support are not compactly supported. The decay of the Riesz transform of a compactly supported function is normally of order $-n$. However, in this case since the function $\psi$ has integral zero, and since it is a derivative, we have that $R_i R_j (\psi)$ decays like $|y|^{-n-2}$ for $|y|$ large. To see this, let $K_{ij}$ be the Kernel associated with the Riesz transform and, using that $\dv u=0$, we compute
\begin{align}
  R_{ij}(\psi)(y) = \int \psi(z) K_{ij}(y-z) \dd z &= \int \varphi(z)  ( u(x+rz) - \bar u(x,r) )  \cdot \grad K_{ij}(y-z) \dd z\notag\\
  & = \int \varphi(z)  ( u(x+rz) - \bar u(x,r) )  \cdot \Big( \grad K_{ij}(y-z) - \grad K_{ij}(y) \Big) \dd z
\end{align}in principal value sense, since $\int \varphi ( u - \bar u)=0$. Letting $y$ be such that $|y| > 2$, we obtain that $|\nabla^2 K_{ij}(\xi)| \leq C |y|^{n+2}$, for all $\xi$ that lies between $y$ and $y-z$, where $C$ is a sufficiently large dimensional constant, gives the desired decay in $|y|$.

To summarize, we have proved that
\begin{align}
R_i R_j [(u(x+ry) - \bar u(x,r)) \cdot \grad \varphi(y)] \leq \begin{cases}
C f(t) r^\alpha & \text{if } |y|<2 \\
\frac{C f(t) r^\alpha }{|y|^{n+2}} & \text{if } |y| \geq 2
\end{cases}\label{e:Riesz:2}
\end{align}where $C$ is a universal constant that does not depend on $x$. Recall that by our assumption and Campanato's theorem we also have $(u(x+ry)-\bar u(x,r)) \leq C f(t) (ry)^\alpha$. Therefore, we obtain from \eqref{e:Riesz:1} and \eqref{e:Riesz:2} the estimate
\begin{align}
\PP \leq Cf(t)^2  r^{2\alpha-1} \left( \int_{B_2} |b(x+ry) - \bar b(x,r)| \dd y + \int_{\R^n \setminus B_2} |b(x+ry)- \bar b(x,r)| \frac{1}{|y|^{n+2-\alpha}} \dd y \right)\label{e:T3:bound:1}.
\end{align}
It is here where it is necessary to make the distinction between the H\"older and Morrey-Camapanato cases for the a priori assumptions on $b$, by making the specific choices for $\bar b$.

\subsubsection*{The H\"older case}
If $\bar b (x,r) = b(x)$, it is clear that
\begin{align}
  \int_{B_2} |b(x+ry) - \bar b(x)| \dd y \leq C \MM(x,r/2) \leq C g(t) r^\beta \label{e:Holder:origin}
\end{align}for  $\beta\in(0,1]$. To bound the tail of the integral arising from the Riesz transforms, we first change variables $r y = z$ so that
\begin{align}
  \int_{\R^n \setminus B_2} |b(x+ry)- \bar b(x)| \frac{1}{|y|^{n+2-\alpha}} \dd y &= r^{2-\alpha} \int_{\RRd \setminus B_{2r}} |b(x+z) - \bar b(x)| \frac{1}{|z|^{n+2-\alpha}} \dd z\notag\\
  & = r^{2-\alpha} \int_{2r}^{\infty} \int_{\partial B_\rho} |b(x+z ) - \bar b(x)| \dd \sigma(z) \frac{1}{\rho^{n+2-\alpha}} \dd \rho\notag\\
  & = r^{2-\alpha} \int_{2r}^{\infty} \frac{\partial}{\partial \rho} \left(\int_{B_\rho} |b(x+z ) - \bar b(x)| \dd \sigma(z)\right) \frac{1}{\rho^{n+2-\alpha}} \dd \rho\notag\\
  & \leq C r^{2-\alpha}  \int_{2r}^{\infty} \left( \rho^n \MM(x,\rho) \right) \frac{1}{\rho^{n+3-\alpha}} \dd \rho\notag\\
  & \leq C r^{2-\alpha} g(t) \int_{2r}^{\infty} \frac{1}{\rho^{3-\alpha-\beta}} \leq C g(t) r^\beta. \label{e:Holder:tail}
\end{align}From \eqref{e:T3:bound:1}, \eqref{e:Holder:origin}, and \eqref{e:Holder:tail}, we obtain
\begin{align}
  \PP \leq C f(t)^2 g(t) r^{2\alpha + \beta - 1}.\label{e:T3:Holder}
\end{align}

\subsubsection*{The BMO case}
In this case we have $\bar b(x,r) = (1/r^n) \int_{B_r(x)} b(z) \dd z$. The difference with the H\"older case lies in the tail of the integral due to the Riesz transform. We use the following classical fact about BMO functions: the difference between the mean on $B_r$ an $B_{\lambda r}$ is bounded by $2^n \ln (1+\lambda)$ times the BMO norm, for all $\lambda >1$. We split the integral in dyadic cylinders to find a bound for each and sum.
\begin{align*}
\int_{\R^n \setminus B_2} &|b(x+ry)-\bar b(x,r)| \frac 1 {|y|^{n+2-\alpha}} \dd y = \sum_{k=1}^\infty \int_{B_{2^{k+1}} \setminus B_{2^k} } |b(x+ry)-\bar b(x,r)| \frac 1 {|y|^{n+2-\alpha}} \dd y \\
&\leq \sum_{k=1}^\infty \int_{B_{2^{k+1}} \setminus B_{2^k} } \left(|b(x+ry)-\bar b(x,2^{k+1} r)| + |\bar b(x,2^{k+1} r)-\bar b(x,r)| \right) \frac 1 {2^{k(n+2-\alpha)}} \dd y \\
&\leq \sum_{k=1}^\infty \frac 1 {2^{k(2-\alpha)}} ||b||_{BMO} + \frac 1 {2^{k(n+2-\alpha)}} \int_{B_{2^{k+1}r} \setminus B_{2^k r} } |\bar b(x,2^{k+1} r)-\bar b(x,r)| \dd y \\
&\leq \sum_{k=1}^\infty \frac 1 {2^{k(2-\alpha)}} ||b||_{BMO} + \frac 1 {2^{k(n+2-\alpha)}} C 2^{kn} \log(1+2^{k+1}) ||b||_{BMO} \leq C ||b||_{BMO}.
\end{align*}
Thus, the tail of the integral in the bound of $\PP$ \eqref{e:T3:bound:1} is bounded by $C ||b||_{BMO} \leq C g(t)$, as well as the fist term in \eqref{e:T3:bound:1}. Therefore, in this case ($\beta=0$) we also obtain
\[ \PP \leq C f(t)^2 g(t) r^{2\alpha + \beta -1}.\]

\subsubsection*{The Morrey-Campanato case}
In this case we have $\bar b(x,r) = 0$. The same proof as in the H\"older case above, but with $\bar b=0$, shows that
\begin{align}
  \PP \leq C f(t)^2 g(t) r^{2\alpha + \beta - 1}\label{e:T3:Morey:negative}
\end{align}when $\sup_{r>0} r^{-\beta} \MM(x,r) <\infty$,  and $\beta\in[-1,0)$.
\end{proof}

Once we have estimated the three terms on the right side of \eqref{e:whole-integral}, the proof of the main theorem is concluded as follows.

\begin{proof}[Proof of Theorem~\ref{t:main}]
Since the vector drift $b$ is a priori assumed to lie in $M^{\beta}$, for some $\beta \in(-1,1]$, with $[b(\cdot, t)]_{M^{\beta}} \leq g(t)$, and $g \in L^{2/(1+\beta)}_{t}$, we obtain from \eqref{e:T1:Holder}, \eqref{e:bound-below-for-D}, and \eqref{e:T3:Holder} the estimate
\begin{align}
  f'(t) f(t) r^{2\alpha} \leq C_* f(t)^2 g(t) r^{2\alpha + \beta - 1} - c_*  f(t)^2 r^{2\alpha-2}
\end{align}which holds for some sufficiently large constant $C_*$, and some sufficiently small positive constant $c_*$. The above estimate implies
\begin{align}
  \frac{f'(t)}{f(t)} \leq C_* g(t) r^{\beta-1} - c_*r^{-2} \leq \bar{C} g(t)^{2/(1+\beta)} \label{e:log:F}
\end{align}for some positive constant $\bar C = \bar C(C_*,c_*,\beta)$. The last inequality was obtained maximizing the expression with respect to $r$.

On the other hand, if $g \in L_t^{2/(1+\beta)}$, we can choose $f$ to be the solution to the ODE
\[f'(t) = 2 \bar C g(t)^{2/(1+\beta)} f(t), \]
which contradicts the above inequality and makes it impossible for the H\"older modulus of continuity to ever be invalidated. Note that the above ODE has the explicit solution
\[f(t) = e^{\left(\int_0^t C g(s)^{2/(1+\beta)} \dd s \right)} f(0), \]
which stays bounded for all $t$.
\end{proof}

\section{The endpoint case $\beta = -1$}\label{s:endpoint}
For the three-dimensional Navier-Stokes equations, i.e. $b=u$, obtaining regularity of the solutions in the endpoint case $ u\in L^{\infty} L^{3}$  is highly non-trivial, and this issue was only settled recently by Iskauriaza, Seregin, and Sverak in \cite{IskauriazaSereginSverak03} (see also \cite{KenigKoch10} for the case $L^{\infty} H^{1/2}$). The case $(p,q) = (\infty,n)$ on the Ladyzhenskaya-Foias-Prodi-Serrin scale is of particular importance as it is the scaling-critical space for the initial data, and it gives the borderline space (on the Lebesgue scale) for constructing solutions via the Picard iteration scheme (so-called mild solutions). Note that $L^{3} \subset M^{-1}$.

It turns out that for the linear system \eqref{eq:1}--\eqref{eq:3}, the proof given above fails in the case $b\in L^{\infty}M^{-1}$, as the constant $\bar C$ blows up as $\beta\rightarrow -1$. The corresponding result for the Navier-Stokes equations (as in \cite{IskauriazaSereginSverak03} or \cite{KenigKoch10}) relies essentially on the nonlinear structure of the equation. In order to obtain a result in this direction for the liner equation \eqref{e:linear-equation}, with the method of this article, we need to impose an extra smallness condition.

\begin{thm}\label{t:endpoint}
Let $T>0$ be arbitrary. Assume that $b$ is a divergence-free vector field in $L^{\infty}([0,T];M^{-1})$, and let $u_{0}\in L^{2} \cap C^{\alpha}$ for some $\alpha \in (0,1)$. There exists a positive constant $\epsilon>0$ such that if for all $t\in(0,T]$ there exists $r_{\ast}(t) > 0 $ with 
\begin{align}
\sup_{t\in[0,T]}\sup_{x\in \R^{n}} \sup_{0<r<r_{\ast}(t)} r \int_{B_{1}} |b(x+ry,t)| \dd y \leq \epsilon \label{eq:borderline:1}
\end{align}
and 
\begin{align}
\int_{0}^{T} \frac{1}{r_{\ast}(t)^{2}} \dd t < \infty, \label{eq:borderline:2}
\end{align}
then $u(\cdot,t) \in C^{\alpha}(\R^{n})$ for all $t\in (0,T]$.
\end{thm}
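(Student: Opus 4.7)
The strategy mirrors the contradiction argument of Section~\ref{s:active-scalar}. After reducing to classical $u$ by mollification (as in Proposition~\ref{p:smoothing-theorem}, using also the a priori bound $\|u(t)\|_{L^{2}} \leq \|u_{0}\|_{L^{2}}$ furnished by the divergence-free structure), I look for the first time $t_{0} > 0$, point $x_{0} \in \R^{n}$, and scale $r_{0} > 0$ at which the strict inequality $\II(x, t, r) < f(t)^{2} r^{2\alpha}$ fails, where $f : [0, T] \to (0, \infty)$ is an increasing function chosen below with $f(0)$ proportional to $[u_{0}]_{C^{\alpha}}$. At the breakdown the derivation of Section~\ref{s:active-scalar} produces
\[
2 f'(t_{0}) f(t_{0}) r_{0}^{2\alpha} \leq \AA + \DD + \PP
\]
at $(x_{0}, t_{0}, r_{0})$, and I aim to derive a contradiction by bounding the right-hand side using (\ref{eq:borderline:1}) together with the global $M^{-1}$ bound $G := \|b\|_{L^{\infty}_{t} M^{-1}_{x}}$.

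Lemmas~\ref{l:advection} and \ref{l:dissipative} apply verbatim: $\AA \leq C r_{0}^{2\alpha - 1} f^{2} \MM(x_{0}, t_{0}, r_{0})$ and $\DD \leq -c f^{2} r_{0}^{2\alpha - 2}$. The pressure bound of Lemma~\ref{l:pressure} must be refined, since its Morrey case \eqref{e:T3:Morey:negative} invokes only the global norm $g(t) = G$, which is not small. I would revisit the dyadic decomposition underlying \eqref{e:Holder:tail} for $\beta = -1$: the tail of the Riesz-transform integral reduces to $\sum_{k \geq 0} \MM(x_{0}, t_{0}, 2^{k+1} r_{0}) / 2^{k(2 - \alpha)}$, and splitting the sum at the threshold $k_{*} \sim \log_{2}(r_{*}(t_{0})/r_{0})$ and using smallness (\ref{eq:borderline:1}) below, respectively the global $r \MM \leq G$ above, yields the geometric-series bound
\[
\AA + \PP \leq C f^{2} \Bigl[ \epsilon + G \bigl( r_{0}/r_{*}(t_{0}) \bigr)^{3 - \alpha} \Bigr] r_{0}^{2\alpha - 2}
\]
whenever $r_{0} < r_{*}(t_{0})$; for $r_{0} \geq r_{*}(t_{0})$ only the unrefined bound $\AA + \PP \leq C G f^{2} r_{0}^{2\alpha - 2}$ is retained.

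Now fix $\delta \in (0, 1)$ small enough that $C G \delta^{3 - \alpha} < c/4$, and choose the threshold $\epsilon$ of the theorem so that $C \epsilon < c/4$. Dichotomize on $r_{0}$. In Case~A, $r_{0} < \delta r_{*}(t_{0})$, the refined estimate and Lemma~\ref{l:dissipative} combine to give $\AA + \PP + \DD \leq -(c/2) f^{2} r_{0}^{2\alpha - 2} < 0$, contradicting the requirement that $f'(t_{0}) \geq 0$. In Case~B, $r_{0} \geq \delta r_{*}(t_{0})$, the unrefined estimate yields
\[
\frac{f'(t_{0})}{f(t_{0})} \leq \frac{CG - c}{2 r_{0}^{2}} \leq \frac{CG}{2 \delta^{2} r_{*}(t_{0})^{2}}.
\]
Choosing
\[
f(t) = f(0) \exp\left( \frac{CG}{\delta^{2}} \int_{0}^{t} \frac{ds}{r_{*}(s)^{2}} \right),
\]
the derivative $f'(t)/f(t) = CG/(\delta^{2} r_{*}(t)^{2})$ strictly exceeds the above upper bound, producing the desired contradiction. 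By (\ref{eq:borderline:2}) the function $f$ is finite on $[0, T]$, and Campanato's Theorem~\ref{t:campanato} delivers $[u(\cdot, t)]_{C^{\alpha}} \leq C f(t) < \infty$.

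The main obstacle is the non-locality of the pressure: through the Riesz transforms, $\PP$ is influenced by $b$ at every scale, so the local smallness (\ref{eq:borderline:1}) cannot be used directly but only through the dyadic split above. This is precisely what forces both the crossover constant $\delta$ and the growth rate of $f$ to depend on $G$, and explains why a quantitative hypothesis like (\ref{eq:borderline:2}) on $r_{*}(t)^{-2}$ must complement (\ref{eq:borderline:1}) — the method of Section~\ref{s:active-scalar} alone breaks down at $\beta = -1$ because the constant $\bar C$ in \eqref{e:log:F} blows up as $\beta \to -1$.
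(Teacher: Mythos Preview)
Your proposal is correct and follows essentially the same approach as the paper: the dyadic split of the pressure tail at scale $r_{\ast}(t_{0})$, the dichotomy on $r_{0}$ versus $\delta r_{\ast}(t_{0})$ with $\delta$ determined by the relation $G\delta^{3-\alpha}\lesssim c$, and the explicit choice of $f$ with growth rate proportional to $G/(\delta^{2}r_{\ast}^{2})$ all match the paper's argument. The only cosmetic differences are that the paper writes the tail as the continuous integral $\int_{2r}^{\infty}\MM(x,t,\rho)\rho^{\alpha-3}\dd\rho$ rather than a dyadic sum, and fixes $\epsilon$ before $\delta$ rather than after.
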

\begin{remark}
If $\Vert b \Vert_{L^{\infty} M^{-1}} \leq \epsilon$, the theorem holds trivially. Additionally, note that for any $\phi \in L^{3}(\R^{3})$, or any $\phi$ in the closure of $C_{0}^{\infty}$ in $M^{-1}$,  we have 
\begin{align*}
\lim_{r\rightarrow 0} r \int_{B_{1}} |\phi(x+ry)| \dd y = 0,
\end{align*} for all $x$, and hence \eqref{eq:borderline:1} holds for some $r_\ast > 0$. Therefore, if $b$ is continuous in time with values in $L^3$, or piecewise continuous with arbitrarily large jumps, or if all jumps are of size smaller than $\eps/2$, the conditions \eqref{eq:borderline:1}--\eqref{eq:borderline:2} are automatically satisfied, with $r_{\ast}(t)$ being a sufficiently small constant, proving regularity of the solution (see also \cite[Theorem 3.1]{CheskidovShvydkoy10a} for a similar result in the critical Besov space). Theorem~\ref{t:endpoint} states that if the drift velocity $b(\cdot,t)$ is  nicely behaved at sufficiently small scales $r_{\ast}(t)$, and if these scales do not go to $0$ \textit{too fast} at any point $t\in [0,T]$, i.e. $r_{\ast}(s)$ cannot vanish at $t$ with a rate faster than $\sqrt{t-s}$, then the solution is regular until $T$. Note that with respect to time regularity, the conditions of Theorem~\ref{t:endpoint} above are stronger than merely $b\in L^{\infty}_tL^{3}_x$, but weaker than $C_tL^{3}_x$.
\end{remark}
\begin{proof}[Proof of Theorem~\ref{t:endpoint}]
From \eqref{e:T1:Holder}, \eqref{e:bound-below-for-D}, \eqref{e:Holder:origin}, and \eqref{e:Holder:tail}, we obtain
\begin{align}
2 f'(t) f(t) r^{2\alpha} &\leq C_{\ast} r^{2\alpha -1} f(t)^{2}\MM(x,t,r) - c_{\ast} f(t)^{2} r^{2\alpha-2} \notag\\
&\qquad +  C_{\ast} f(t)^{2} r^{2\alpha - 1} \left( \MM(x,t,r/2) + r^{2-\alpha}  \int_{2r}^{\infty} \MM(x,t,\rho)\frac{1}{\rho^{3-\alpha}} \dd \rho\right), \label{e:end:1}
\end{align}
where $\MM(x,t,r)  = \int_{B_{1}} |b(x+ry,t)| \dd y$, and by assumption of the theorem we have
\begin{align}
\sup_{0<r<r_{\ast}(t)} r \MM(x,t,r) \leq \epsilon \label{e:end:2}
\end{align}
and
\begin{align}
\sup_{r>0} r \MM(x,t,r) \leq \Vert b \Vert_{L^{\infty}_{t}M_{1}^{-1}} = B < \infty \label{e:end:3}
\end{align}
uniformly in $x,t$. Inserting the bounds \eqref{e:end:2}--\eqref{e:end:3} into \eqref{e:end:1}, we have
\begin{align}
2 r^{2}\frac{d}{dt} \log f(t)  &\leq C_{\ast} r \MM(x,t,r) - c_{\ast}+  C_{\ast} r \MM(x,t,r/2) \notag\\
&\qquad +  C_{\ast} r^{3-\alpha} \left( \int_{2r}^{r_{\ast}} (\rho \MM(x,t,\rho)) \frac{1}{\rho^{4-\alpha}} \dd \rho +   \int_{r_{\ast}}^{\infty} (\rho \MM(x,t,\rho)) \frac{1}{\rho^{4-\alpha}} \dd \rho\right) \label{e:end:4}
\end{align}for all $r>0$. To bound the right side of \eqref{e:end:4}, we distinguish the cases $r/r_{\ast} \leq \delta$, and $r/r_{\ast} > \delta$, where we let
\begin{align}
\delta = \min \left\{ \left( \frac{\epsilon}{B} \right)^{1/(3-\alpha)} , \frac 12 \right\}.
\end{align}
Indeed, if $r/r_{\ast} \leq \delta$, we have $2r < r_{\ast}$ and $B (r/r_{\ast})^{3-\alpha} \leq B \delta^{3-\alpha} \leq \epsilon$, so that by \eqref{eq:borderline:1} implies
\begin{align}
2 r^{2}\frac{d}{dt} \log f(t)  &\leq C_{\ast} \epsilon - c_{\ast} + 2 C_{\ast} \epsilon + C_{\ast} r^{3-\alpha} C_{\alpha} \left(\epsilon r^{\alpha - 3} + B r_{\ast}^{\alpha-3} \right) \notag\\
&\leq C_{\ast} (3 + 2 C_{\alpha} ) \epsilon - c_{\ast} , \label{e:end:5}
\end{align}for some positive constant $C_{\alpha} >0$, and for all $0 < r \leq \delta\, r_{\ast}$. Therefore, if we choose $\epsilon$ as 
\begin{align}
\epsilon = \frac{c_{\ast}}{C_{\ast} ( 3 + 2 C_{\alpha})}, \label{eq:epsilon}
\end{align}then we obtain from \eqref{e:end:5} that
\begin{align}
\frac{d}{dt} \log f(t) \leq 0 \label{e:end:6}
\end{align}for all $t \in [0,T]$, and all $0 < r \leq\delta\, r_{\ast}(t)$. On the other hand, if $r > \delta\, r_{\ast}(t)$, we bound the right side of  \eqref{e:end:4} by making use of \eqref{e:end:3}, namely
\begin{align}
\frac{d}{dt} \log f(t) &\leq \frac{1}{2 r^{2}} \left( 3 C_{\ast} B + C_{\ast} B r^{3-\alpha}\int_{2r}^{\infty} \frac{1}{\rho^{4-\alpha}} \right)\leq  \frac{C_{\ast} B (3 + C_{\alpha})}{2 \delta^{2} r_{\ast}(t)^{2}}\label{e:end:7}
\end{align}for some $C_{\alpha} >0$, for all $r > \delta\, r_{\ast}(t)$. The proof of the theorem is then concluded since we may choose $f$ as
\begin{align}
 f(t) = f(0) \exp \left( \frac{C_{\ast} B (3 + C_{\alpha})}{\delta^{2}} \int_{0}^{t}\frac{1}{r_{\ast}(s)^{2}} \right)
\end{align}which is finite for all $t\leq T$ by \eqref{eq:borderline:2}, and it contradicts \eqref{e:end:6}--\eqref{e:end:7}.
\end{proof}

\section*{Acknowledgment}
Luis Silvestre was partially supported by NSF grant DMS-1001629 and the Sloan Foundation.

\bibliographystyle{amsplain}   
\bibliography{as}             

\providecommand{\bysame}{\leavevmode\hbox to3em{\hrulefill}\thinspace}
\providecommand{\MR}{\relax\ifhmode\unskip\space\fi MR }
\providecommand{\MRhref}[2]{%
  \href{http://www.ams.org/mathscinet-getitem?mr=#1}{#2}
}
\providecommand{\href}[2]{#2}
\begin{thebibliography}{10}

\bibitem{Aronson68}
D.~G. Aronson, \emph{Non-negative solutions of linear parabolic equations},
  Ann. Scuola Norm. Sup. Pisa (3) \textbf{22} (1968), 607--694. \MR{0435594 (55
  \#8553)}

\bibitem{Campanato63}
S.~Campanato, \emph{Propiet\`a di h\"olderianit\`a di alcune classi di
  funzioni}, Ann. Scuola Norm. Sup. Pisa (3) \textbf{17} (1963), 175--188.
  \MR{0156188 (27 \#6119)}

\bibitem{ChanVasseur07}
C.~H. Chan and A.~Vasseur, \emph{Log improvement of the {P}rodi-{S}errin
  criteria for {N}avier-{S}tokes equations}, Methods Appl. Anal. \textbf{14}
  (2007), no.~2, 197--212. \MR{2437103 (2009h:35313)}

\bibitem{ChenZhao95}
Z.~Q. Chen and Z.~Zhao, \emph{Diffusion processes and second order elliptic
  operators with singular coefficients for lower order terms}, Math. Ann.
  \textbf{302} (1995), no.~2, 323--357. \MR{1336339 (96e:60136)}

\bibitem{CheskidovShvydkoy10b}
A.~Cheskidov and Shvydkoy R., \emph{Regularity problem for the 3d navier-stokes
  equations: the use of kolmogorov's dissipation range}, Arxiv preprint
  arXiv:1102.1944v1 (2011).

\bibitem{CheskidovShvydkoy10a}
A.~Cheskidov and R.~Shvydkoy, \emph{The regularity of weak solutions of the
  3{D} {N}avier-{S}tokes equations in {$B^{-1}_{\infty,\infty}$}}, Arch.
  Ration. Mech. Anal. \textbf{195} (2010), no.~1, 159--169. \MR{2564471}

\bibitem{ConstantinIyerWu08}
P.~Constantin, G.~Iyer, and J.~Wu, \emph{Global regularity for a modified
  critical dissipative quasi-geostrophic equation}, Indiana Univ. Math. J.
  \textbf{57} (2008), no.~6, 2681--2692. \MR{2482996 (2009k:35120)}

\bibitem{FoiasProdi67}
C.~Foia{\c{s}} and G.~Prodi, \emph{Sur le comportement global des solutions
  non-stationnaires des {\'e}quations de {N}avier-{S}tokes en dimension {$2$}},
  Rend. Sem. Mat. Univ. Padova \textbf{39} (1967), 1--34. \MR{0223716 (36
  \#6764)}

\bibitem{FriedlanderVicol10}
S.~Friedlander and V.~Vicol, \emph{{Global well-posedness for an
  advection-diffusion equation arising in magneto-geostrophic dynamics}}, Ann.
  Inst. H. Poincar\'e Anal. Non Lin\'eaire doi:10.1016/j.anihpc.2011.01.002
  (2011).

\bibitem{IskauriazaSereginSverak03}
L.~Iskauriaza, G.~A. Seregin, and V.~Sverak, \emph{{$L_{3,\infty}$}-solutions
  of {N}avier-{S}tokes equations and backward uniqueness}, Uspekhi Mat. Nauk
  \textbf{58} (2003), no.~2(350), 3--44. \MR{1992563 (2004m:35204)}

\bibitem{KenigKoch10}
C.~E. Kenig and G.~S. Koch, \emph{{An alternative approach to regularity for
  the Navier-Stokes equations in critical spaces}}, Ann. Inst. H. Poincar\'e
  Anal. Non Lin\'eaire doi:10.1016/j.anihpc.2010.10.004 (2011).

\bibitem{Kiselev10}
A.~Kiselev, \emph{Regularity and blow up for active scalars}, Math. Model. Nat.
  Phenom. \textbf{5} (2010), no.~4, 225--255. \MR{2662457}

\bibitem{KiselevNazarovVolberg07}
A.~Kiselev, F.~Nazarov, and A.~Volberg, \emph{Global well-posedness for the
  critical 2{D} dissipative quasi-geostrophic equation}, Invent. Math.
  \textbf{167} (2007), no.~3, 445--453. \MR{2276260 (2008f:35308)}

\bibitem{Kukavica08}
I.~Kukavica, \emph{Regularity for the {N}avier-{S}tokes equations with a
  solution in a {M}orrey space}, Indiana Univ. Math. J. \textbf{57} (2008),
  no.~6, 2843--2860. \MR{2483004 (2010f:35286)}

\bibitem{Ladyzhenskaya67}
O.~A. Lady{\v{z}}enskaja, \emph{Uniqueness and smoothness of generalized
  solutions of {N}avier-{S}tokes equations}, Zap. Nau\v cn. Sem. Leningrad.
  Otdel. Mat. Inst. Steklov. (LOMI) \textbf{5} (1967), 169--185. \MR{0236541
  (38 \#4836)}

\bibitem{LadyzhenskayaSollonikovUralceva67}
O.~A. Lady{\v{z}}enskaja, V.~A. Solonnikov, and N.~N. Ural'tseva, \emph{Linear
  and quasilinear equations of parabolic type}, Translated from the Russian by
  S. Smith. Translations of Mathematical Monographs, Vol. 23, American
  Mathematical Society, Providence, R.I., 1967. \MR{0241822 (39 \#3159b)}

\bibitem{LiskevichZhang04}
V.~Liskevich and Q.~S. Zhang, \emph{Extra regularity for parabolic equations
  with drift terms}, Manuscripta Math. \textbf{113} (2004), no.~2, 191--209.
  \MR{2128546 (2006a:35127)}

\bibitem{Moser61}
J.~Moser, \emph{On {H}arnack's theorem for elliptic differential equations},
  Comm. Pure Appl. Math. \textbf{14} (1961), 577--591. \MR{0159138 (28 \#2356)}

\bibitem{NazarovUraltseva10}
A.I. Nazarov and N.N. Ural'tseva, \emph{{The Harnack inequality and related
  properties for solutions to elliptic and parabolic equations with
  divergence-free lower-order coefficients}}, Arxiv preprint arXiv:1011.1888
  (2010).

\bibitem{Prodi59}
G.~Prodi, \emph{Un teorema di unicit{\`a} per le equazioni di
  {N}avier-{S}tokes}, Ann. Mat. Pura Appl. (4) \textbf{48} (1959), 173--182.
  \MR{0126088 (23 \#A3384)}

\bibitem{Semenov06}
Y.~A. Semenov, \emph{Regularity theorems for parabolic equations}, J. Funct.
  Anal. \textbf{231} (2006), no.~2, 375--417. \MR{2195337 (2007b:35149)}

\bibitem{SereginSilvestreSverakZlatos10}
G.~Seregin, L.~Silvestre, V.~Sverak, and A.~Zlatos, \emph{{On divergence-free
  drifts}}, Arxiv preprint arXiv:1010.6025 (2010).

\bibitem{Serrin62}
J.~Serrin, \emph{On the interior regularity of weak solutions of the
  {N}avier-{S}tokes equations}, Arch. Rational Mech. Anal. \textbf{9} (1962),
  187--195. \MR{0136885 (25 \#346)}

\bibitem{Triebel92}
H.~Triebel, \emph{Theory of function spaces. {II}}, Monographs in Mathematics,
  vol.~84, Birkh{\"a}user Verlag, Basel, 1992. \MR{1163193 (93f:46029)}

\bibitem{Zhang97}
Q.~S. Zhang, \emph{Gaussian bounds for the fundamental solutions of {$\nabla
  (A\nabla u)+B\nabla u-u_t=0$}}, Manuscripta Math. \textbf{93} (1997), no.~3,
  381--390. \MR{1457736 (98c:35067)}

\bibitem{Zhang06}
\bysame, \emph{Local estimates on two linear parabolic equations with singular
  coefficients}, Pacific J. Math. \textbf{223} (2006), no.~2, 367--396.
  \MR{2221033 (2007k:35378)}

\end{thebibliography}
\index{Bibliography@\emph{Bibliography}}%

\end{document}